\numberwithin{equation}{section}
\def\bG{{\mathbf G}}
\def\bF{{\mathbf F}}
\def\bF{{\mathbf F}}
\def\bU{{\mathbf U}}
\def\bV{{\mathbf V}}
\def\bW{{\mathbf W}}
\def\bP{{\mathbf P}}
\def\bU{{\mathbf U}}
\def\bN{{\mathbf N}}
\def\bM{{\mathbf M}}
\def\ZZ{{\mathbb Z}}
\def\RR{{\mathbb R}}
\def\CC{{\mathbb C}}
\newtheorem{prop}{Proposition}[section]
\newtheorem{theo}[prop]{Theorem}
\newtheorem{lemm}[prop]{Lemma}
\newtheorem{rema}[prop]{Remark}
\newtheorem{defi}[prop]{Definition}
\def\begeq{\begin{equation}}
\def\endeq{\end{equation}}
\def\bN{{\mathbf N}}
\title{On uniform K-stability of pairs}
\author{Gang Tian\thanks{Supported partially by
NSF and NSFC grants}\\Peking University}
\date{}
\begin{document}

\maketitle

\noindent

{\bf Abstract}: In this paper, we discuss stable pairs, which were first studied by S. Paul, and give a proof for a result I learned from him. As a consequence, we will show that the K-stability implies the CM-stability.

\tableofcontents

\section{Introduction}

Let $\bf {G}$ be the linear group $\bf {SL}(N+1,\mathbb C)$ and $\bf {V}$, $\bf {W}$ be its two rational representations.
\footnote{Our arguments here also work for any classical subgroups of $\bf {GL}(N+1,\mathbb C)$ after some modifications, moreover,
all representations in this paper are finite dimensional and complex.} By the rationality, say for $\bf {V}$, we mean
that for all $\alpha\in \bf {V}^{\vee}$ (dual space) and $v\in \bf {V}\setminus \{0\}$, the {matrix coefficient} $\varphi_{\alpha , v}$
is a {regular function} on $\bf {G}$, that is, $\varphi_{\alpha , v}\in \mathbb C[\bf {G}]$, where

\begin{equation}\label{eq:coefs}
\varphi_{\alpha , v}: \,\bf G\,\mapsto  \,\mathbb C,~~~~ \varphi_{\alpha , v}(\sigma)\,=\,\alpha(\rho(\sigma) v).
\end{equation}

Recall that by an one-parameter subgroup of $\bG$, we mean a homomorphism: $\lambda:\CC^*\mapsto \bG$, where $\CC^*\,=\,\CC\backslash \{0\}$ is the multiplicative group consisting of all non-zero complex numbers. For any such a $\lambda$ and $v\in \bV$, we can associate a weight as the unique integer $w_{\lambda}(v)$ such that there is a non-zero limit $v_0$ in $\bf {V}$:

\begin{equation}\label{eq:weight-2}
\lim_{t\to 0}\,t^{-w_{\lambda}(v)}\lambda(t) (v)\,=\,v_0 \,\not=\,0.
\end{equation}
Let $\bf {T}$ be a maximal algebraic torus of $\bf {G}$ and write $\bf {gl}(N+1,\mathbb C)$ as $\bf {gl}$ for simplicity.
Given such a $\bf {T}$, its character lattice is defined by

\begin{equation}\label{eq:char-lattice}
\bf {M}_{\mathbb Z}\,= \,\mbox{Hom}_{\mathbb Z}(\bf {T},\mathbb C^*) .
\end{equation}
Its dual lattice, denoted by $\bf {N}_{\mathbb Z}$, consists of all one parameter subgroups
contained in $\bf {T}$. More explicitly, for each $\ell\in \bf {N}_{\mathbb Z}$, the corresponding one-parameter subgroup $\lambda_\ell$ is given by
$$ m(\lambda_\ell(t))\,=\,t^{(\ell , m)},~~~~\forall m\in \bf {M}_{\mathbb Z},~~~t\in \bf {T},$$
where $ (\cdot, \cdot)$ is the standard pairing: $\bf {N}_{\mathbb Z}\times\bf {M}_{\mathbb Z}\,\mapsto\, \mathbb Z $.

We have an associated vector space
$$\bf {M}_{\mathbb R}\,=\, \bf {M}_{\mathbb Z}\otimes_{\mathbb Z}\mathbb{R}\,\cong\,\mathbb R^N.$$
Since $\bf {V}$ is rational, it decomposes under the action of $\bf {T}$ into \emph{weight spaces}

\begin{equation}\label{eq:decomp}
\bf {V}\,=\,\bigoplus_{a\in {A}}\bf {V}_{a},~~~{\rm where}~~{\bf {V}}_{a}\,=\,\{v\in \bf {V} \, |\, t\cdot v\,=\,a(t)\, v \ , \, t \in \bf {T}\}.
\end{equation}
Here $A\,=\, \{a \in \bf {M}_{\mathbb Z}\ | \ \bf {V}_{a}\neq 0\}$.

Given any $v\in \bf {V} \setminus \{0\}$, we denote by $A(v)$ the set of all $a\in A$ with $v_{a}\neq 0$,
where $v_a$ is the projection of $v$ into $\bV_{a}$.

The {\bf weight polytope} $\mathcal{N}(v)$ of $v$ is defined to be the convex hull of $A(v)$ in
$\bf {M}_{\mathbb R}$. Since $\bf {V}$ is a rational representation, $ \mathcal{N}(v)$ is a rational polytope.

There is a natural representation $\bf {gl}(N+1,\CC)$, which consists of all $(N+1)\times(N+1)$ matrices,
by left multiplication:
$$\bG\times \bf {gl}(N+1,\mathbb C)\,\mapsto\, \bf {gl}(N+1,\mathbb C): \,(\sigma, B)\,\mapsto\, \sigma B.$$

The weight polytope $\mathcal N(I)$ of the identity matrix $I$ in $\bf {gl}$
is the standard $N$-simplex which contains the origin in $\bf {M}_\mathbb R$. Then we define degree ${\rm deg}(\bf {V})$ of $\bf {V}$ by

\begin{equation}\label{eq:deg-of-V}
{\rm deg}(\bf {V})\,=\,\min\{\,k\in \mathbb Z\,|\,k > 0~{\rm and}~\mathcal N(v) \subset k\, \mathcal N(I)~{\rm for ~all}~0\not= v\in \bf {V}\,\}.
\end{equation}

Clearly, this definition implies
$$ q\,w_{\lambda}(I)\,\le\, w_{\lambda}(v).$$
where $q\,=\,\deg(\bV)$.

Now we can introduce the notion of K-stability due to S. Paul \cite{paul13}.

\begin{defi}\label{defi:p-stable}
Let $v\in \bV\backslash \{0\}$ and $w\in \bW\backslash \{0\}$.

\noindent

$\bullet$ We call $(v,w)$ K-semistable if for any one-parameter subgroup $\lambda$ of $\bG$,
we have $w_{\lambda}(w)\,\le \, w_{\lambda}(v)$.

\noindent

$\bullet$ We call $(v,w)$ K-stable if it is K-semistable and
$w_{\lambda}(w)\,< \, w_{\lambda}(v)$ whenever the one-parameter subgroup $\lambda$ satisfying:
$\deg(\bV) \,w_{\lambda}(I)\,<\, w_{\lambda}(v)$.

\end{defi}

Here is our main theorem.

\begin{theo}\label{th:HMP}
If $(v,w)$ is $K$-stable, then there is an integer $m > 0$ such that for any one-parameter subgroup $\lambda$ of $\bG$,
we have
\begin{equation}\label{eq:tian0}
m\,( w_{\lambda}(v)\,-\, w_{\lambda}(w))\,\ge\, w_{\lambda}(v)\,-\,\deg(\bV) \,w_{\lambda}(I).
\end{equation}

\end{theo}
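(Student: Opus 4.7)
The proof reduces \eqref{eq:tian0} to a piecewise-linear estimate on a fixed maximal torus, exploiting the finiteness of weight polytopes realized in the $\bG$-orbit of $(v,w)$. The strategy has three stages: a reduction of the general one-parameter-subgroup statement to a torus statement via conjugation; a finiteness observation producing only finitely many pairs of weight polytopes to consider; and a convex-geometric argument that produces a bound for each such pair.

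\textbf{Reduction to a torus.} Fix a maximal torus $\bT\subset\bG$ with co-character lattice $\bN_\ZZ$. Every one-parameter subgroup $\lambda$ is conjugate to some $\lambda_\ell$, $\ell\in\bN_\ZZ$, via an element $\sigma\in\bG$. The equivariance $w_\lambda(x)=w_{\lambda_\ell}(\sigma^{-1}x)$, together with the conjugation invariance $w_\lambda(I)=w_{\lambda_\ell}(I)$ (since $w_\lambda(I)$ is the smallest eigenvalue of $\lambda$ on the standard representation, an invariant of the conjugacy class), transforms \eqref{eq:tian0} into
\[
m\bigl(w_{\lambda_\ell}(\sigma^{-1}v)-w_{\lambda_\ell}(\sigma^{-1}w)\bigr)\ge w_{\lambda_\ell}(\sigma^{-1}v)-q\,w_{\lambda_\ell}(I),\qquad q:=\deg(\bV).
\]
As $\sigma$ ranges over $\bG$, the $\bT$-supports $A(\sigma^{-1}v)\subseteq A(\bV)$ and $A(\sigma^{-1}w)\subseteq A(\bW)$ take only finitely many values, so the set of realized pairs $(P,Q):=(\mathcal{N}(\sigma^{-1}v),\mathcal{N}(\sigma^{-1}w))$ of weight polytopes is finite. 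It thus suffices to produce an integer $m_{P,Q}$ for each such pair and take $m=\max_{(P,Q)}m_{P,Q}$.

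\textbf{Estimate for a fixed pair.} For $h^-_S(\ell):=\min_{a\in S}(\ell,a)$ one has $w_{\lambda_\ell}(x)=h^-_{\mathcal{N}(x)}(\ell)$, so the functions
\[
F(\ell):=h^-_P(\ell)-h^-_Q(\ell),\qquad G(\ell):=h^-_P(\ell)-q\,h^-_{\mathcal{N}(I)}(\ell)
\]
are continuous, positively homogeneous of degree one, and piecewise linear on $\bN_\RR:=\bN_\ZZ\otimes\RR$. K-semistability gives $F\ge 0$; the degree bound \eqref{eq:deg-of-V} gives $G\ge 0$; and the strict-inequality clause of K-stability, applied to $\sigma\lambda_\ell\sigma^{-1}$, yields the key implication $F(\ell)=0\Rightarrow G(\ell)=0$ for $\ell\in\bN_\ZZ$, extended to $\bN_\RR$ by positive homogeneity and the piecewise-linear structure. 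Decompose $\bN_\RR$ into the finite family of polyhedral cones $\{C_\alpha\}$ refining the normal fans of $P$, $Q$, and $\mathcal{N}(I)$; on each $C_\alpha$ both $F$ and $G$ are linear. If $F\equiv 0$ on some $C_\alpha$, then so is $G$ by the implication above; otherwise $F$ is a nonzero non-negative linear form on $C_\alpha$, and the inequality $cF-G\ge 0$ holds on $C_\alpha$ iff it holds on each of the (finitely many) extremal rays. On rays where $F$ vanishes, $G$ vanishes too, so no constraint is imposed; on the other rays one needs $c\ge G/F$ evaluated at a generator, a finite list of bounds. Setting $c_\alpha$ to be their maximum and $m_{P,Q}:=\lceil\max_\alpha c_\alpha\rceil$ gives the desired bound for this pair.

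\textbf{Main obstacle.} The only delicate point is the continuous upgrade of the K-stability implication: given $\ell\in\bN_\RR$ with $F(\ell)=0$, one must show $G(\ell)=0$. For this, locate $\ell$ in some cone of linearity $C_\alpha$, approximate it by rational points $\ell_n\in\bN_\QQ\cap C_\alpha$, clear denominators to obtain integer points where K-stability of $(v,w)$ applies directly to $\sigma\lambda_{\ell_n}\sigma^{-1}$, and use the linearity of $F$ and $G$ on $C_\alpha$ to propagate the vanishing of $G$ from rational approximants to $\ell$ itself. Once this is in hand, the remainder of the argument reduces to routine convex geometry on finitely many polyhedral cones.
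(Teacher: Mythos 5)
Your proposal is correct in outline, but it is not the route the paper's main proof takes: it is, almost verbatim, the alternative strategy that the paper only sketches in Section 6 and that Li Yan and Xiaohua Zhu carry out in Appendix B. The paper's own proof of Theorem \ref{th:HMP} in Section 4 is complex-geometric: it encodes the two sides of \eqref{eq:tian0} as vanishing orders of the holomorphic sections $S_{v,w}$ and $S_{v,u}$ of the line bundles ${\bf L}$, ${\bf L}'$ over a compactification $\tilde{\bf T}_x$ of a translated torus orbit, obtains a uniform bound on the vanishing order of $S_{v,u}$ along boundary divisors by integrating the curvature of $\tilde{\bf L}'$ against $\omega^{N-1}$ and bounding the volumes $\int_{\tilde{\bf T}_x}\omega^N$ uniformly in $x$ (Lemma \ref{lemm:bound}), and uses a variant of Richardson's lemma (Lemma \ref{lemm:richardson}) to show that $S_{v,w}$ vanishes along every boundary divisor on which $S_{v,u}$ does. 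Your approach replaces all of this by the finiteness of the set of realized weight supports together with elementary convex geometry on the common refinement of the normal fans of $\mathcal{N}(\sigma^{-1}v)$, $\mathcal{N}(\sigma^{-1}w)$ and $\mathcal{N}(I)$; what it buys is a purely combinatorial argument with no extension theorems, volume bounds or curvature integrals, which is precisely why Section 6 flags this route as potentially simpler. Both arguments share the same first step, the reduction to a fixed maximal torus by conjugation and the conjugation-invariance of $w_\lambda(I)$.

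One step of your sketch needs tightening. In the ``continuous upgrade'' of the implication $F(\ell)=0\Rightarrow G(\ell)=0$ you propose to approximate a real $\ell$ by arbitrary rational points of the cone $C_\alpha$ and propagate the vanishing of $G$. But K-stability at an integral point $\ell_n$ only yields $G(\ell_n)>0\Rightarrow F(\ell_n)>0$, which is vacuous whenever $F(\ell_n)>0$; generic rational approximants of $\ell$ therefore give no information about $G$. You must approximate inside the face $C_\alpha\cap\{F=0\}$: since $F$ is a rational linear form, nonnegative on the rational pointed cone $C_\alpha$, this face is itself a rational polyhedral cone, hence spanned by integral ray generators, at which the integral implication applies and forces $G=0$; linearity of $G$ on $C_\alpha$ then kills $G$ on the whole face, in particular at $\ell$. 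With this correction the remaining extremal-ray computation of $m_{P,Q}$, and hence of $m=\max_{(P,Q)}m_{P,Q}$, goes through.
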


\begin{rema}\label{rema:HMP}
We may also say that $(v,w)$ is uniformly K-stable if \eqref{eq:tian0} is satisfied for any one-parameter subgroups.

\end{rema}

We denote by $||\cdot||$ a Hermitian norm on both $\bV$ and $\bW$ and define
\begin{equation}\label{eq:KN-func}
p_{v,w}(\sigma)\,=\,\log||\sigma (w)||^2\,-\,\log||\sigma(v)||^2.
\end{equation}
Then we have

\begin{theo}\label{th:properness}
If $(v,w)$ is K-stable, then there is an integer $m>0$ and a uniform constant $C$ such that
\begin{equation}\label{eq:properness}
m\,p_{v,w}(\sigma)\,\ge\,\deg(V)\log ||\sigma||^2- \log ||\sigma (v)||^2-C.
\end{equation}
\end{theo}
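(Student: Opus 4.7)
The plan is to derive \eqref{eq:properness} from the numerical inequality \eqref{eq:tian0} by reducing, via the $KAK$ decomposition of $\bG$, to an asymptotic analysis on a maximal torus, then using Theorem~\ref{th:HMP} to control the leading behavior and compactness of $\bK$ to absorb the error terms into the additive constant $C$.

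First, choose Hermitian norms on $\bV, \bW$ and on $\mathbf{gl}(N+1,\CC)$ that are $\bK$-invariant for $\bK = SU(N+1)$, and use $\bG = \bK A \bK$ with $A$ the real positive diagonal torus. For $\sigma = k_1\exp(H)k_2$,
$$\|\sigma\|=\|\exp(H)\|,\quad \|\sigma(v)\|=\|\exp(H)(k_2v)\|,\quad \|\sigma(w)\|=\|\exp(H)(k_2w)\|.$$
K-stability is $\bG$-invariant, because $w_\lambda(gu)=w_{g^{-1}\lambda g}(u)$ and $w_\lambda(I)=\min_i\ell_i$ depends only on the conjugacy class of $\lambda$; hence Theorem~\ref{th:HMP} applies with the same $m$ to each $\bK$-translate $(k_2v,k_2w)$. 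It therefore suffices to prove the inequality for $\sigma=\exp(H)$ in a way that is uniform in the vector data along the $\bK$-orbit of $(v,w)$.

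Second, decompose $u=\sum_\alpha u_\alpha$ by $\bT$-weights; then $\log\|\exp(H)(u)\|^2 = \log\sum_\alpha\|u_\alpha\|^2 e^{2(\alpha,H)}$, and the log-sum-exp estimate yields
$$\log\|\exp(H)(u)\|^2 \,=\, 2W_H(u) + O(1),$$
where $W_H(u):=\max_{\alpha\in A(u)}(\alpha,H)$ is (half) the support function of the weight polytope $\cN(u)$. Applying \eqref{eq:tian0} to the inverse one-parameter subgroup $\lambda^{-1}$ (cocharacter $-H$) and using $w_{\lambda^{-1}}(u)=-W_H(u)$, $w_{\lambda^{-1}}(I)=-W_H(I)$, a rearrangement produces the support-function inequality
$$m\,W_H(k_2w)\,-\,(m-1)\,W_H(k_2v)\,\geq\,\deg(\bV)\,W_H(I),$$
which extends by homogeneity and continuity from integer cocharacters to all $H\in\mathfrak{a}$. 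Combining these two steps gives
$$m\log\|\sigma(w)\|^2 - (m-1)\log\|\sigma(v)\|^2 - \deg(\bV)\log\|\sigma\|^2 \,\geq\, -C,$$
which is equivalent to \eqref{eq:properness}.

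The main obstacle is the uniformity of $C$ in $k_2\in\bK$: the constant in the log-sum-exp estimate involves $\log\min_{\alpha\in A(k_2u)}\|(k_2u)_\alpha\|^2$, which can tend to $-\infty$ as a weight component of $k_2u$ vanishes. One handles this by working directly with the tropical function $T_u(H,k_2):=\max_{\alpha\in A(k_2u)}[2(\alpha,H)+\log\|(k_2u)_\alpha\|^2]$, which differs from $\log\|\exp(H)(k_2u)\|^2$ by at most $\log|A(u)|$ and is continuous in $k_2$ since degenerating terms simply drop out of the max. The inequality from Theorem~\ref{th:HMP} matches this tropical structure vertex by vertex, and compactness of $\bK$ together with the finiteness of the weight polytope delivers a $k_2$-uniform $C$.
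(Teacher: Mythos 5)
Your reduction via the $KAK$ decomposition and the tropical function $T_u(H,k_2)$ is a genuinely different route from the paper's. The paper observes that \eqref{eq:tian0} is precisely the statement that the auxiliary pair $\bigl(v^{\otimes(m-1)}\otimes I^{\otimes \deg(\bV)},\, w^{\otimes m}\bigr)$ is K-semistable (weights are additive under tensor products), and then invokes Theorem \ref{th:HMP2} for that pair to obtain $m\log\|\sigma(w)\|^2-(m-1)\log\|\sigma(v)\|^2-\deg(\bV)\log\|\sigma\|^2\ge -C$, which is \eqref{eq:properness}; all the analytic difficulty is thereby delegated to the already-established semistability criterion. You instead try to prove the lower bound directly on a maximal torus and propagate it over $\bK$.

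The gap is exactly at the step you flag as ``the main obstacle,'' and your proposed fix does not close it. Theorem \ref{th:HMP} controls only the slopes of your tropical functions --- the support functions $W_H(k_2v)$, $W_H(k_2w)$, $W_H(I)$ of the full weight polytopes --- and says nothing about the intercepts $\log\|(k_2u)_\alpha\|^2$. To deduce a lower bound on $mT_w-(m-1)T_v-\deg(\bV)T_I$ from the slope inequality you need, at each $H$, a weight $\alpha\in A(k_2w)$ that simultaneously nearly maximizes $(\alpha,H)$ and has $\|(k_2w)_\alpha\|$ bounded below; but as $k_2$ ranges over $\bK$ the coefficient of the maximizing weight can tend to $0$, in which case $T_w(H,k_2)$ falls below $2W_H(k_2w)$ by roughly $\log\|(k_2w)_\alpha\|^2$ on a range of $H$. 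A one-dimensional model: weights $\{0,1\}$, $k_2w=e_0+\epsilon e_1$, $k_2v=e_0+e_1$; the slope inequality holds for every $\epsilon>0$, yet the pointwise defect of the tropical combination is of order $\log\epsilon^2$, which is not uniform. What actually rescues the estimate is that in the limit $k_2\to k_\infty$ the weight set $A(k_\infty w)$ shrinks and the (semi)stability hypothesis at $k_\infty$ forbids the degeneration --- but extracting a single destabilizing one-parameter subgroup from such a degenerating sequence is precisely the content of Theorem \ref{th:bhj1} (Richardson's lemma), i.e.\ of Theorem \ref{th:HMP2} itself. So ``compactness of $\bK$ plus finiteness of the weight polytope'' is not sufficient as stated: you must either invoke Theorem \ref{th:HMP2} for the tensor pair, as the paper does, or reprove it, which is a substantive additional argument rather than a routine uniformity check.
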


The organization of this paper is as follows: In next section, we recall a theorem on K-semistability first given by S. Paul and a proof of this theorem by S. Boucksom, T. Hisamoto and M. Jonsson. In Section 3, we show a connection between K-stability and positivity of certain line bundle. This line bundle is actually the CM-bundle in the case of constant scalar curvature K\"ahler metrics. In Section 4, we prove Theorem \ref{th:HMP} and its consequence, Theorem \ref{th:properness}. In Section 5, we give an application of our main theorem. We show that the K-stability implies the CM-stability. In Appendix A, we give a proof of a variant of Richardson's lemma due to S. Paul.

\vskip 0.1in

\noindent

{\bf Acknowledgment}: I would like to thank Chenyang Xu and Xiaohua Zhu for inspiring conversations as well as useful comments on earlier version of this paper.

\section{Hilbert-Mumford-Paul criterion}

In this section, we discuss a theorem which was first given by S. Paul in \cite{paul12a}.

\begin{theo}\label{th:HMP2}

Let $(v,w)$ be a pair in $(\bV\setminus\{0\})\times (\bW\setminus\{0\})$, then
$p_{v,w}$ is bounded from below on $\bG$ if and only if $(v,w)$ is K-semistable.

\end{theo}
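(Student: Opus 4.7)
The plan is to restrict $p_{v,w}$ to each one-parameter subgroup and read off the asymptotics from the weight decomposition. Given $\lambda$, I would choose a maximal torus $\bT$ containing its image and take the Hermitian norms to be invariant under the compact subtorus of $\bT$, so that the weight decompositions $v=\sum_\chi v_\chi$ and $w=\sum_\chi w_\chi$ are orthogonal. Then with $\ell=\ell_\lambda$,
\[
\|\lambda(t)v\|^2 \,=\, \sum_{\chi\in A(v)}|t|^{2\langle\chi,\ell\rangle}\|v_\chi\|^2 \,\sim\, c_v|t|^{2w_\lambda(v)} \quad (t\to 0),
\]
and analogously for $w$, so $p_{v,w}(\lambda(t))=2(w_\lambda(w)-w_\lambda(v))\log|t|+O(1)$ as $\log|t|\to-\infty$. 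Boundedness below forces $w_\lambda(w)\le w_\lambda(v)$, and running over all $\lambda$ yields K-semistability.

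\textbf{Backward direction, geometric setup.} For the harder implication I would argue by contradiction. Assume $p_{v,w}$ is not bounded below and pick $\sigma_n\in\bG$ with $\|\sigma_n w\|^2/\|\sigma_n v\|^2\to 0$. The idea is to projectivize the pair inside $\bV\oplus\bW$: normalize $(\sigma_n v,\sigma_n w)/\sqrt{\|\sigma_n v\|^2+\|\sigma_n w\|^2}$ to lie on the unit sphere, and extract a convergent subsequence. The ratio hypothesis forces the $\bW$-component of the limit to vanish, so the limit is $(v^\flat,0)$ with $\|v^\flat\|=1$. Thus $[v^\flat:0]\in\overline{\bG\cdot[v:w]}\subset\bP(\bV\oplus\bW)$, and this limit point is not in the orbit itself, since for any $\sigma\in\bG$ the vector $\sigma w$ is nonzero and hence $[\sigma v:\sigma w]$ always has nonzero $\bW$-component.

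\textbf{Extracting the destabilizing 1-psg and the main obstacle.} At this point I would invoke the variant of Richardson's lemma proved in Appendix A (after Paul) to produce a one-parameter subgroup $\lambda$ of $\bG$ with $\lim_{t\to 0}\lambda(t)\cdot[v:w]=[v^\flat:0]$ in $\bP(\bV\oplus\bW)$. Decomposing $(v,w)=\sum_\chi(v_\chi,w_\chi)$ into $\lambda$-weight components of $\bV\oplus\bW$, the leading term of $\lambda(t)(v,w)$ as $t\to 0$ is $t^c\sum_{\langle\chi,\ell\rangle=c}(v_\chi,w_\chi)$ with $c=\min\{\langle\chi,\ell\rangle:(v_\chi,w_\chi)\neq 0\}$; for its projective class to land in the $\bV$-factor, each $w_\chi$ with $\langle\chi,\ell\rangle=c$ must vanish by orthogonality of weight spaces, forcing $c=w_\lambda(v)<w_\lambda(w)$ and contradicting K-semistability. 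The main obstacle is the Richardson-type lemma itself: it is the GIT input that promotes an orbit-closure limit in $\bP(\bV\oplus\bW)$ to a limit along a one-parameter subgroup with the correct leading-order structure. The surrounding compactness, normalization, and weight-decomposition accounting are routine; the delicate invariant-theoretic content is entirely offloaded to Appendix A.
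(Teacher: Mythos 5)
Your forward direction and your reduction of the backward direction to the statement $\overline{\bG[v,w]}\cap\mathbb{P}(\bV\oplus\{0\})\neq\emptyset$ are both fine (the paper obtains the same reduction via Paul's distance formula \eqref{eq:distance} rather than your normalize-and-extract-a-subsequence argument, but the two are interchangeable). The gap is precisely in the step you describe as the "delicate invariant-theoretic content": you offload it to the Richardson-type lemma of Appendix A, but that lemma is a statement about \emph{torus} orbits --- it produces, for a point $z$ in the closure of a $\bT$-orbit of $x$, a one-parameter subgroup of $\bT$ reaching $\varsigma(z)$ for some $\varsigma\in\bT$. Your limit point $[v^\flat:0]$ lies in the closure of the full $\bG$-orbit, and there is no reason for it to lie in the closure of any single maximal-torus orbit of $[v:w]$, so Appendix A does not apply. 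The statement you actually need is the paper's Theorem \ref{th:bhj1} (Boucksom--Hisamoto--Jonsson, Theorem 5.6 of \cite{bhj16}): if $\overline{\bG x}$ meets a $\bG$-invariant Zariski closed set $Z$, then \emph{some} point of $Z\cap\overline{\bG x}$ is reachable by a one-parameter subgroup of $\bG$. That is a genuinely stronger input than the torus case (it is where Kempf/Richardson theory for reductive groups enters), and it is not what Appendix A proves.

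A second, smaller point: even with the correct theorem you cannot demand that the one-parameter subgroup converge to your specific $[v^\flat:0]$ --- the paper remarks explicitly that a given $z\in Z\cap\overline{\bG x}$ need not be reachable unless its stabilizer in $\bG$ is reductive. This is harmless for your argument, since \emph{any} one-parameter-subgroup limit landing in $\mathbb{P}(\bV\oplus\{0\})$ yields $w_\lambda(v)<w_\lambda(w)$ by your weight computation, which is all the contradiction requires. So the fix is to replace the Appendix A citation by Theorem \ref{th:bhj1} and to settle for an unspecified point of the intersection. Alternatively, if you insist on using only the torus lemma, you must first reduce from $\bG$ to $\bT$ via the Cartan ($K$--$\bT$--$K$) decomposition and a compactness argument in the maximal compact subgroup, as is done in Appendix B; as written, your proof skips that reduction.
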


This theorem has an equivalent form. Consider orbits
$\bf {G} [v,w] \,\subset \,\mathbb{P}(\bV\oplus\bW)$ and $\bG [v,0]\,\subset\, \mathbb{P}(\bV\oplus\{0\})\,\subset \,\mathbb{P}(\bV\oplus\bW)$, where
$[v,w]$ (resp,. $[v,0]$) denotes the corresponding point in the projective space $\mathbb{P}(\bV\oplus\bW)$ (resp. $\mathbb{P}(\bV\oplus\{0\})$).
We denote their closures by $\overline{\bG [v,w]}$ and $\overline{\bG [v,0]}$.

It is shown by S. Paul in \cite{paul12a} that
\begin{equation}\label{eq:distance}
p_{v,w}(\sigma)\,=\,\log \tan^2 d(\sigma ([v,w]), \sigma( [v,0])),
\end{equation}
where $d(\cdot,\cdot)$ is the distance function of
the Fubini-Study metric on $\mathbb{P}(\bV\oplus\bW)$. It follows that
$p_{v,w}$ is bounded from below on $\bG$ if and only if there is a constant $c > 0$ such that
$$d(\sigma ([v,w]),\sigma ([v,0]))\,\ge \, c ~~{\rm on}~\bG.$$
It follows that $p_{v,w}$ is bounded from below on $\bG$ if and only if
\begin{equation}\label{eq:stpaul}
\overline{\bG [v,w]}\,\cap\,\overline{\bG [v,0]}\,=\,\emptyset.
\end{equation}
Therefore, we have

\begin{theo}\label{th:stpaul1}
The pair $(v,w)$ is K-semistable if and only if
\eqref{eq:stpaul} holds.

\end{theo}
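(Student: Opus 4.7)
The plan is to deduce Theorem \ref{th:stpaul1} directly from the Hilbert-Mumford-Paul criterion (Theorem \ref{th:HMP2}) combined with the geometric reformulation of $p_{v,w}$ as a projective angle. By Theorem \ref{th:HMP2}, K-semistability of $(v,w)$ is equivalent to $p_{v,w}$ being bounded below on $\bG$, so my task reduces to proving that $p_{v,w}$ is bounded from below if and only if \eqref{eq:stpaul} holds. The bridge will be the identity \eqref{eq:distance}, which I would first verify by a direct Fubini-Study computation: writing $\cos d([x],[y]) = |\langle x,y\rangle|/(\|x\|\|y\|)$ with $x = (\sigma v,\sigma w)$ and $y = (\sigma v, 0)$, one immediately gets $\tan^2 d(\sigma[v,w],\sigma[v,0]) = \|\sigma w\|^2/\|\sigma v\|^2$, whose log is $p_{v,w}(\sigma)$.

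With \eqref{eq:distance} in hand, the forward direction is immediate from compactness. If the orbit closures in $\mathbb{P}(\bV\oplus\bW)$ are disjoint, then since both are compact, there is a uniform $c>0$ with $d(x,y)\ge c$ for all $x\in\overline{\bG[v,w]}$, $y\in\overline{\bG[v,0]}$. Specializing to $x=\sigma[v,w]$, $y=\sigma[v,0]$ and applying \eqref{eq:distance} gives $p_{v,w}(\sigma)\ge \log\tan^2 c > -\infty$ on all of $\bG$.

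For the reverse direction, suppose $p\in\overline{\bG[v,w]}\cap\overline{\bG[v,0]}$. The key observation is that $\sigma\cdot(v,0)=(\sigma v,0)$, so $\overline{\bG[v,0]}\subset\mathbb{P}(\bV\oplus\{0\})$, forcing $p=[v',0]$ for some $v'\ne 0$. Choose $\sigma_n\in\bG$ with $\sigma_n[v,w]\to p$; then there exist scalars $c_n\in\CC^*$ such that $c_n(\sigma_n v,\sigma_n w)\to(v',0)$, i.e.\ $c_n\sigma_n v\to v'\ne 0$ while $c_n\sigma_n w\to 0$. Since the ratio $\|\sigma_n w\|^2/\|\sigma_n v\|^2=\|c_n\sigma_n w\|^2/\|c_n\sigma_n v\|^2$ is scale-invariant, it tends to $0$, and \eqref{eq:distance} gives $p_{v,w}(\sigma_n)\to-\infty$. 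Thus $p_{v,w}$ fails to be bounded below.

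The main obstacle is conceptual rather than computational: it lies in recognizing that $\overline{\bG[v,0]}$ is trapped inside the linear subspace $\mathbb{P}(\bV\oplus\{0\})$, and that projective convergence lifts, after rescaling, to convergence in $\bV\oplus\bW$ from which one can read off the ratio of norms. Once these two geometric facts are in place, the theorem follows by stringing together the equivalences: K-semistability $\Leftrightarrow$ $p_{v,w}$ bounded below (by Theorem \ref{th:HMP2}) $\Leftrightarrow$ $d(\sigma[v,w],\sigma[v,0])\ge c$ uniformly (by \eqref{eq:distance}) $\Leftrightarrow$ \eqref{eq:stpaul}.
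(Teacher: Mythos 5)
Your reduction of the statement to the equivalence ``$p_{v,w}$ bounded below $\Leftrightarrow$ \eqref{eq:stpaul}'' is correct, and you actually supply details the paper only asserts: the Fubini--Study computation verifying \eqref{eq:distance}, the compactness argument for the forward direction, and the rescaling argument showing that an intersection point $[v',0]$ forces $\|\sigma_n w\|^2/\|\sigma_n v\|^2\to 0$. All of these steps check out.

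The problem is the other half of your chain: you invoke Theorem \ref{th:HMP2} as a black box to convert ``$p_{v,w}$ bounded below'' into ``K-semistable.'' Inside this paper that is circular. The paper never proves Theorem \ref{th:HMP2} independently; its derivation runs in exactly the opposite direction — it establishes Theorem \ref{th:stpaul1} first and then concludes ``So Theorem \ref{th:HMP2} is proved.'' The genuinely hard content of both statements is the same: K-semistability is a condition tested only along one-parameter subgroups, while boundedness of $p_{v,w}$ (equivalently, disjointness of the orbit closures) is a statement about all of $\bG$, and something must bridge that gap. In the paper the bridge is Theorem \ref{th:bhj1} (the Boucksom--Hisamoto--Jonsson/Kempf-type result): if $\overline{\bG[v,w]}$ meets the $\bG$-invariant closed set $Z=\mathbb{P}(\bV\oplus\{0\})$, then some point of the intersection is reached by a one-parameter subgroup $\lambda$, which translates into $w_\lambda(w)>w_\lambda(v)$ and contradicts K-semistability. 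Your proposal contains no substitute for this step. To repair it, either cite Paul's original, independent proof of Theorem \ref{th:HMP2} explicitly as an external input (in which case your argument is complete but merely transports the difficulty elsewhere), or replace the appeal to Theorem \ref{th:HMP2} in the hard direction by the Theorem \ref{th:bhj1} argument; the easy direction (not K-semistable $\Rightarrow$ the closures meet, since $\lim_{t\to 0}\lambda(t)[v,w]=[v_0,0]=\lim_{t\to 0}\lambda(t)[v,0]$ when $w_\lambda(w)>w_\lambda(v)$) you can keep as is.
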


\begin{rema}\label{rema:hmss}
In \cite{paul12a}, S. Paul called $(v,w)$ \emph{semistable} if \eqref{eq:stpaul} holds. In the case that
$\bV=\CC$ is a trivial representation, we can take $v=1$, then $(1,w)$ is semistable if and only if $0$ is not in the closure of the affine orbit $\bG w$.
In other words, $w$ is semistable in the usual sense of Geometric Invariant Theory. This shows that the K-stability generalizes the notion of stability in classical Geometric Invariant Theory.

\end{rema}

Let us give a proof of Theorem \ref{th:stpaul1} following S. Boucksom, T. Hisamoto and M. Jonsson in \cite{bhj16}.
This proof is based on the following theorem (\cite{bhj16}, Theorem 5.6).

\begin{theo}\label{th:bhj1}
Let $\bU$ be any rational representation of $\bG$. If the (Zariski) closure of the
$\bG$-orbit of a point $x\in  \mathbb{P}(\bU)$ meets a $\bG$-invariant Zariski closed subset $Z \subset  \mathbb{P}(\bU)$, then some $z \in Z \cap {\overline{\bG x}}$ can be reached by an one-parameter subgroup $\lambda$ of $\bG$, i.e. $\lambda(t)( x )$ converge to $z$ as $t$ tends to $0$.
\end{theo}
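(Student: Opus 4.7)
The plan is to follow the classical proof of the Hilbert--Mumford criterion, suitably adapted to the projective orbit meeting a closed invariant set.

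The first step is a curve-selection argument. Pick a point $z \in Z \cap \overline{\bG x}$. Because $\bG x$ is constructible and $z$ lies in its Zariski closure, one can produce a formal arc landing at $z$: a morphism from the punctured formal disk $\spec \CC((t))$ into $\bG x$ whose extension to $\spec \CC[[t]]$ sends the closed point to $z$. Composing with the orbit map $\bG \to \bG x$ and passing, if necessary, to a finite ramified cover of $\spec \CC((t))$ to trivialize the stabilizer torsor, yields an element $\tilde\varphi \in \bG(\CC((t)))$ such that $\tilde\varphi(t)\cdot x \to z$ in $\mathbb{P}(\bU)$ as $t \to 0$. This is essentially the content of the variant of Richardson's lemma indicated in Appendix A.

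The second step is the algebraic Cartan decomposition (Smith normal form) in $\bG(\CC((t))) = \mathbf{SL}(N+1,\CC((t)))$: any such $\tilde\varphi$ admits a factorization
\begin{equation*}
\tilde\varphi(t) \,=\, k_1(t)\,\lambda(t)\, k_2(t),
\end{equation*}
with $k_1,k_2 \in \bG(\CC[[t]])$ (so $g := k_2(0)$ and $h := k_1(0)$ are well-defined in $\bG$) and $\lambda$ a genuine algebraic one-parameter subgroup of $\bG$. Choose a lift $\tilde x \in \bU$ of $x$ and decompose by the weight spaces of $\lambda$, $\bU = \bigoplus \bU_i$ with weight $w_i$. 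Writing $k_2(t)\tilde x = g\tilde x + t\,\tilde w(t)$ in $\bU \otimes \CC[[t]]$, a weight-by-weight comparison shows that the minimal power of $t$ appearing in $\lambda(t) k_2(t)\tilde x$ and in $\lambda(t) g\tilde x$ agree (equal to $t^{w_{i_0}}$, where $i_0$ is the smallest index with $(g\tilde x)_{i_0} \neq 0$), and that both rescaled limits equal $(g\tilde x)_{i_0}$. Consequently $\lim_{t\to 0}\lambda(t)(gx) = \lim_{t\to 0}\lambda(t)(k_2(t)\cdot x)$ in $\mathbb{P}(\bU)$. Combining this with $\tilde\varphi(t)\cdot x \to z$ and $k_1(t) \to h$ continuously in $\bG$ yields $\lambda(t)(g x) \to h^{-1}z$.

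Finally, replace $\lambda$ by its conjugate $\mu(t) := g^{-1} \lambda(t) g$, again an algebraic one-parameter subgroup of $\bG$. Then $\mu(t)\cdot x = g^{-1}\lambda(t)(gx) \to g^{-1}h^{-1}z$, a point that lies in $Z \cap \overline{\bG x}$ by the $\bG$-invariance and closedness of $Z$. This is the sought one-parameter subgroup. The main obstacle I anticipate is the first step: producing an element of $\bG(\CC((t)))$ (rather than just a curve in $\bG x$) tracking a branch of the orbit closure tending to $z$. This requires handling the possibly non-trivial stabilizer torsor over the formal disk and may force a ramified base change, which is precisely the territory of Richardson's lemma. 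Once inside $\bG(\CC((t)))$, Smith normal form and the weight analysis reduce the problem to the contracting dynamics of a single cocharacter on $\bU$, which is essentially formal.
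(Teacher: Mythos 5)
First, for context: the paper itself gives no proof of Theorem \ref{th:bhj1}; it is quoted verbatim from [BHJ17, Theorem 5.6] (ultimately a form of Kempf's theorem on orbit closures), and the only destabilization statement proved in the text is the torus analogue, Lemma \ref{lemm:richardson}, which Appendix A handles by an entirely different method (weight decompositions and a separating hyperplane). So your proposal is measured against the standard literature proof. Your skeleton — formal arc into the orbit, lift to $\bG(\CC((t)))$ after a ramified base change, Cartan decomposition $\tilde\varphi = k_1\lambda k_2$, extract the cocharacter — is the right one, and the first step is fine modulo standard facts about torsors over $\CC((t))$. The gap is in the second step: the claim that the minimal power of $t$ in $\lambda(t)k_2(t)\tilde x$ and in $\lambda(t)g\tilde x$ agree, with equal rescaled limits, is false. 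The perturbation $t\,\tilde w(t)=k_2(t)\tilde x-g\tilde x$ may have nonzero components in $\lambda$-weight spaces of strictly lower weight than any weight space meeting $g\tilde x$, and there $\lambda(t)$ contributes $t^{w_i}$ with $w_i$ very negative, which the single extra factor of $t$ does not suppress. Concretely: $\bG=\SL(2,\CC)$, $\bU=\CC^2$, $\lambda(t)=\mathrm{diag}(t^{-1},t)$, $k_2(t)=\left(\begin{smallmatrix}1&t\\0&1\end{smallmatrix}\right)$, $\tilde x=e_2$. Then $g=k_2(0)=\mathrm{Id}$ and $[\lambda(t)\tilde x]\equiv[e_2]$, while $\lambda(t)k_2(t)\tilde x=e_1+te_2\to e_1$, so the two limits in $\mathbb{P}(\bU)$ are $[e_2]$ and $[e_1]$. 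Hence your conclusion $\lambda(t)(gx)\to h^{-1}z$ does not follow, and the final conjugation step collapses with it.

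This is not cosmetic: the passage from a $\CC((t))$-point to a genuine cocharacter is exactly where the content of the theorem lies. What the classical Iwahori argument actually uses is a one-sided inequality, not an equality of limits: if $\lambda(t)k_2(t)\hat x\to 0$ in a linear representation, then each weight $w_i$ with $(g\hat x)_i\neq 0$ satisfies $w_i\geq 1$, whence $\lambda(t)(g\hat x)\to 0$ too. That settles the case $Z=\{0\}$ of a vector representation but not a general closed invariant $Z\subset\mathbb{P}(\bU)$. The standard repair (Kempf) is to linearize membership in $Z$: pick a finite-dimensional $\bG$-submodule $W$ of the homogeneous ideal of the affine cone over $Z$, in a single degree, cutting $Z$ out; the equivariant evaluation map into $W^{\vee}$ sends $Z$ to $0$, one applies the $\{0\}$-case there, and transports the conclusion back while keeping track of the extra $\CC^*$ from the projectivization. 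Without this (or an equivalent device), step two of your argument is where the proof breaks.
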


Note that a given $z\in Z \cap {\overline{\bG x}}$ may not be reachable by any one-parameter subgroup unless the stabilizer of $z$ in $\bG$ is reductive.

Clearly, in order to prove Theorem \ref{th:stpaul1}, we only need to prove that if $(v,w)$ is not K-stable, then two closures of orbits
$\overline{\bG [v,w]}$ and $\overline{\bG [v,0]}$ do not intersect. If it is not true, then $Z\cap \overline{\bG [v,w]}\not= \emptyset$, where $Z=\mathbb{P}(\bV\oplus\{0\})$. Since $Z$ is a closed $\bG$-invariant subset in $\mathbb{P}(\bV\oplus\bW)$, by Theorem \ref{th:bhj1}, we have an one-parameter subgroup
$\lambda$ of $\bG$ such that
$$\lim_{t\to 0} \,\lambda(t)([v,w])\,\in\,\mathbb{P}(\bV\oplus\{0\}).$$
This is equivalent to saying that $w_{\lambda}(w)\,> \, w_{\lambda}(v)$. This contradicts to the K-semistability condition. So Theorem \ref{th:HMP2} is proved.

\section{Positivity vs stability}

In this section, we will interpret the stability in terms of positivity of certain line bundle. This interpretation will be used in proving our main theorems.
We assume that $(v,w)\in \bV\times\bW$ be as before and is K-semistable.

Let $\pi: \tilde{\mathbb{P}}(\bV, \bW)\mapsto \mathbb{P}(\bV\oplus\bW)$ be the blow-up of $\mathbb{P}(\bV\oplus\bW)$ along subvarieties $\mathbb{P}(\bV\oplus \{0\})$ and $\mathbb{P}(\{0\}\oplus\bW)$. Then $\tilde{\mathbb{P}}(\bV, \bW)$ is a smooth projective variety with a natural $\bG$-action and $\pi$ is an isomorphism on the complement of the exceptional loci over $\mathbb{P}(\bV\oplus \{0\})$ and $\mathbb{P}(\{0\}\oplus\bW)$.
Since neither $v$ nor $w$ is zero, the orbit $\bG [v,w]$ lies in the complement of $\mathbb{P}(\bV\oplus \{0\})$ and $\mathbb{P}(\{0\}\oplus\bW)$, so it lifts to an orbit $\pi^{-1}(\bG[v,w])$ in $\tilde{\mathbb{P}}(\bV, \bW)$. Let $\bar \bG$ be a smooth variety compactifying $\bG$. The action of $\bG$ induces a holomorphic map
$$\phi: \bG\,\mapsto \, \tilde{\mathbb{P}}(\bV, \bW),~~~~\phi(\sigma)\,=\,\pi^{-1}(\sigma([v,w])).$$
Since our assumption that the representations of $\bG$ on $\bV$ and $\bW$ are rational, by \eqref{eq:coefs}, $\phi$ is made of polynomials on $\bG$, so there is a blow-up $\tilde \bG$ of $\bar\bG$ along $\bar\bG\backslash \bG$ such that it extends to a holomorphic map
$$\tilde\phi: \tilde\bG\,\mapsto \, \tilde{\mathbb{P}}(\bV, \bW).$$

There are two natural projections:
$$\pi_\bV: \tilde{\mathbb{P}}(\bV,\bW)\,\mapsto\,\mathbb{P}(\bV\oplus\{0\})~~{\rm and}~~\pi_\bW:\tilde{\mathbb{P}}(\bV,\bW) \,\mapsto\, {\mathbb{P}}(\{0\}\oplus\bW).$$

For any vector space $\bU$, we will denote by $H_\bU$ the hyperplane bundle over $\mathbb{P}(\bU)$. Then its inverse $H_\bU^{-1}$ is the universal bundle over
$\mathbb{P}(\bU)$, so we have
\begin{equation}\label{eq:univ-b}
H_{\bU}^{-1}\backslash Z_\bU\,=\,\bU\backslash \{0\},
\end{equation}
where $Z_\bU$ denotes the zero section of $H_\bU^{-1}$. It follows that for any non-zero $u\in \bU$, we can regard $\sigma(u)$ as a point in the fiber of $H_\bU^{-1}$ over $\sigma ([u])\subset \mathbb{P}(\bU)$. Since $\sigma(u)\not= 0$, $\sigma(u)^{-1}$ can be naturally regarded as a point in the fiber of $H_\bU$ over $\sigma([u])$.
Now we define a line bundle over $\tilde{\mathbb{P}}(\bV,\bW)$:
\begin{equation}\label{eq:lb1}
\bf {L} \,=\, \pi^*_{\bf V} H_{\bf V}^{-1}\otimes \pi_{\bf W}^*H_{\bf W}.
\end{equation}

Put
$$\tilde\pi_{\bf V}=\pi_{\bf V}\cdot \tilde\phi: \tilde{\bf {G}}\,\mapsto\,\mathbb{P}(\bf V\oplus\{0\})~~{\rm and}~~
\tilde\pi_{\bf {W}}=\pi_{\bf W}\cdot\tilde \phi:\tilde{\bf G} \,\mapsto\, {\mathbb{P}}(\{0\}\oplus\bf W).$$
Since they maps $\bf {G}$ onto the orbits $\bG[v]$ and $\bf G[w]$ respectively, we have a natural section $S_{v,w}$ of $\tilde \phi^*\bf L$ over $\bf G$:
$$ S_{v,w}(\sigma)\,=\,\tilde\pi_\bV^*\sigma(v)\otimes \tilde\pi_{\bf W}^*\sigma(w)^{-1},~~~\forall \sigma\in \bf G.$$
The Hermitian norms on $\bf V$ and $\bf W$ induce Hermitian metrics on line bundles $H_{\bf V}$ and $H_{\bf W}$, consequently, we have a natural Hermitian metric $||\cdot||_{\bf L}$ on $\bf L$ and consequently, $\phi^*{\bf L}$.
We observe
$$p_{v,w}(\sigma)\,=\,-\,\log ||S_{v,w}||^2_{\bf L}.$$
Then on $X$, we have
\begin{equation}\label{eq:p=S}
p_{v,w}\,\ge\,- 2 c~~~{\rm if~and~only~of}~~~||S_{v,w}||_{\bf L} \,\le\,e^c.
\end{equation}
Hence, by well-known extension theorem for holomorphic functions,
$S_{v,w}$ can be extended to be a holomorphic section over $\tilde\bG$. The converse is also true. Hence, we have shown

\begin{lemm}\label{lemm:tian-1}
The functional $p_{v,w}$ is bounded from below if and only if $S_{v,w}$ extends as a holomorphic section over
$\tilde \bf G$, equivalently, $(v,w)$ is K-semistable if and only if $S_{v,w}$ extends as a holomorphic section.
\end{lemm}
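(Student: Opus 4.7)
The plan is to convert the statement into a question about the extendability of a holomorphic section across the boundary $\tilde\bG \setminus \bG$ via the identity recorded just above, namely $p_{v,w}(\sigma) = -\log\|S_{v,w}(\sigma)\|^2_\bL$, and then to invoke a standard Riemann extension argument. The final \emph{equivalently} clause will then follow immediately by citing Theorem~\ref{th:HMP2}.

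First I would dispose of the direction ``$S_{v,w}$ extends $\Rightarrow p_{v,w}$ bounded below.'' If $S_{v,w}$ prolongs to a holomorphic section of $\tilde\phi^*\bL$ over the compact variety $\tilde\bG$, then, since the chosen Hermitian metrics on $H_\bV$ and $H_\bW$ induce a smooth Hermitian metric $\|\cdot\|_\bL$ on $\bL$, its pullback by the holomorphic map $\tilde\phi$ is a continuous Hermitian metric on $\tilde\phi^*\bL$. The continuous nonnegative function $\|S_{v,w}\|_\bL$ is therefore bounded above on the compact set $\tilde\bG$, so that $p_{v,w}$ is bounded below on $\bG$.

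For the converse, assume $\|S_{v,w}\|_\bL \le e^c$ on $\bG$ for some constant $c$. The task is to extend $S_{v,w}$ across the proper analytic subset $E := \tilde\bG \setminus \bG$ of the smooth variety $\tilde\bG$. Around any point $p \in E$ I would pick local coordinates on $\tilde\bG$ together with a local trivialization of $\tilde\phi^*\bL$ on a small polydisk $U$ about $p$. In this trivialization $S_{v,w}$ becomes a holomorphic function $f$ on $U \setminus E$, and the uniform bound $\|S_{v,w}\|_\bL \le e^c$ translates into a uniform bound on $|f|$, since the ratio between the pulled-back Hermitian metric and the flat trivialization metric is a continuous function on $\bar U$ and hence bounded there. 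Riemann's second extension theorem then extends $f$ holomorphically across $E \cap U$, and these local extensions patch together into a holomorphic section of $\tilde\phi^*\bL$ over $\tilde\bG$ restricting to $S_{v,w}$ on $\bG$.

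The main point I expect to require some care with is the setup rather than the extension step itself: one must verify that $\tilde\phi$ is genuinely defined and holomorphic on all of $\tilde\bG$, so that the pullback line bundle, the pullback Hermitian metric, and the section $S_{v,w}$ make sense globally across the boundary. This was already arranged in the construction of $\tilde\bG$ as an iterated blow-up of $\bar\bG$ along $\bar\bG \setminus \bG$. Once this is granted, each direction becomes essentially a one-line application of standard complex geometry, and the K-semistability reformulation of the statement comes for free by invoking Theorem~\ref{th:HMP2}, which identifies K-semistability of $(v,w)$ with the lower boundedness of $p_{v,w}$.
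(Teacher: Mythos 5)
Your proposal is correct and follows essentially the same route as the paper: the identity $p_{v,w}(\sigma)=-\log\|S_{v,w}(\sigma)\|^2_{\bf L}$ converts lower boundedness of $p_{v,w}$ into a uniform bound on $\|S_{v,w}\|_{\bf L}$, the Riemann extension theorem then gives the extension across $\tilde{\bf G}\setminus{\bf G}$, compactness of $\tilde{\bf G}$ gives the converse, and Theorem~\ref{th:HMP2} supplies the K-semistability reformulation. You merely spell out the local trivialization and metric-comparison details that the paper leaves implicit.
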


Next we discuss the case of K-stability. Recall that $\bG$ has a natural representation $\bf {gl}\,=\,\bf {gl}(N+1,\mathbb C)$. Set $\bf U = \bf {gl}^{\otimes q}$ and $u=I^{\otimes q}$,
where $q\,=\, {\rm deg}(\bf V)$. We have said that $(v,u)$ is K-semistable, moreover, for some uniform $c > 0$, we have
$$p_{v,u}(\sigma)\,=\, {\rm deg}(\bf V)\, \log ||\sigma || \,- \,\log ||\sigma (v)||\,\ge \,-\, c,~~~\forall \sigma\in \bf {G}.$$
Here $||\sigma ||$ is actually the Hilbert-Schmidt norm of $\sigma  \in \bf {gl}$.

As above, we have a blow-up variety $\tilde{\mathbb{P}}(\bf V, \bf U)$ of $\mathbb{P}(\bf V\oplus\bf U)$ along subvarieties
$\mathbb{P}(\bf V\oplus\{0\})$ and $\mathbb{P}(\{0\}\oplus\bf U)$, together with a holomorphic map
$$\phi': \bf G\,\mapsto \, \tilde{\mathbb{P}}(\bf V, \bf U),~~~\phi'(\sigma)= \sigma([v,u]).$$
Moreover, we may choose the blow-up $\tilde{ \bf G}$ of $\bar\bf G$ above such that $\psi$ extends to a holomorphic map
$$\tilde \phi': \tilde {\bf G}\,\mapsto\, \tilde{\mathbb{P}}(\bf V, \bf U).$$
As above, we have a line bundle on $\tilde{\mathbb{P}}(\bf V, \bf U)$
\begin{equation}\label{eq:lb2}
\bf L' \,=\, \pi_{\bf V}^* H_{\bf V}^{-1}\otimes \pi_{\bf U}^*H_{\bf U}.
\end{equation}
The orbit $\bf G([v,w],[v,u])$ can be also lifted to a $\bf G$-orbit in
$\tilde{\mathbb{P}}(\bf V, \bf W)\times \tilde{\mathbb{P}}(\bf V, \bf U)$ and induces a holomorphic map:
$$\tilde \psi\,=\,(\tilde\phi,\tilde\phi'): \tilde{\bf G} \,\mapsto\, \tilde{\mathbb{P}}(\bf V, \bf W)\times \tilde{\mathbb{P}}(\bf V, \bf U).$$
We also have two holomorphic sections
$S_{v,w}$ and $S_{v,u}$ of line bundles $\phi^*\bf L$ and $(\phi')^*\bf L'$, which are both equipped with natural Hermitian metrics $||\cdot||_{\bf L}$
and $||\cdot||_{\bf L'}$, over $\tilde{\bf G}$.
Theorem \ref{th:properness} can be put in an equivalent form:

\begin{theo}\label{th:properness2}
If $(v,w)$ is K-stable, then there is an integer $m >0$ such that for some constant $C>0$,
\begin{equation}\label{eq:tian1}
||S_{v,w}||_{\bf L}^{m}\,\le\, C\,||S_{v,u}||_{\bf L'}.
\end{equation}

\end{theo}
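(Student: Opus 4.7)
The plan is to derive Theorem~\ref{th:properness2} in the equivalent logarithmic form of Theorem~\ref{th:properness} and then prove the latter as a direct consequence of Theorem~\ref{th:HMP} via the Hilbert--Mumford--Paul criterion. Indeed, since $p_{v,w}(\sigma) = -\log\|S_{v,w}\|_{\bL}^{2}$ and, by construction of $\bL'$, $p_{v,u}(\sigma) = \deg(\bV)\log\|\sigma\|^{2} - \log\|\sigma(v)\|^{2} = -\log\|S_{v,u}\|_{\bL'}^{2}$, the bound $\|S_{v,w}\|_{\bL}^{m}\le C\,\|S_{v,u}\|_{\bL'}$ is equivalent (after adjusting the additive constant) to
$$m\,p_{v,w}(\sigma)\ \ge\ \deg(\bV)\log\|\sigma\|^{2} - \log\|\sigma(v)\|^{2} - C',$$
which is precisely Theorem~\ref{th:properness}.

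To prove Theorem~\ref{th:properness}, I would let $m$ be the integer furnished by Theorem~\ref{th:HMP} and pass to the auxiliary pair
$$a\ :=\ v^{\otimes(m-1)}\otimes u\ \in\ \bV^{\otimes(m-1)}\otimes\bU,\qquad b\ :=\ w^{\otimes m}\ \in\ \bW^{\otimes m}.$$
Using $w_\lambda(u) = q\,w_\lambda(I)$ and additivity of weights under tensor products, a direct computation gives
$$w_\lambda(a) - w_\lambda(b)\ =\ (m-1)w_\lambda(v) + q\,w_\lambda(I) - m\,w_\lambda(w)\ =\ m\bigl(w_\lambda(v) - w_\lambda(w)\bigr) - \bigl(w_\lambda(v) - q\,w_\lambda(I)\bigr),$$
which is non-negative by Theorem~\ref{th:HMP}. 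Hence $(a,b)$ satisfies the K-semistability condition of Definition~\ref{defi:p-stable}.

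Having verified K-semistability, I would apply the Hilbert--Mumford--Paul criterion (Theorem~\ref{th:HMP2}) to conclude that $p_{a,b}$ is bounded from below on $\bG$ by some $-C$. Using multiplicativity of the induced Hermitian norms on tensor products, together with $\|\sigma(u)\|^{2} = \|\sigma\cdot I^{\otimes q}\|^{2} = \|\sigma\|^{2q}$ for the Hilbert--Schmidt norm, this unwinds to
$$m\log\|\sigma(w)\|^{2} - (m-1)\log\|\sigma(v)\|^{2}\ \ge\ q\log\|\sigma\|^{2} - C,$$
which rearranges to $m\,p_{v,w}(\sigma) \ge q\log\|\sigma\|^{2} - \log\|\sigma(v)\|^{2} - C$, i.e.\ Theorem~\ref{th:properness}, and hence Theorem~\ref{th:properness2}.

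The only substantive step is recognizing the correct auxiliary pair $(a,b)$ whose K-semistability, i.e.\ the weight inequality $w_\lambda(b)\le w_\lambda(a)$, transcribes verbatim into the conclusion of Theorem~\ref{th:HMP}; once this pair is in hand, the remainder is routine algebra combined with the norm-level criterion already established in Theorem~\ref{th:HMP2}. All genuine difficulty is thereby concentrated in Theorem~\ref{th:HMP} itself, which must of course be proved separately.
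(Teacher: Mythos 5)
Your proposal is correct and follows essentially the same route as the paper: Theorem~\ref{th:properness2} is treated as the equivalent form of Theorem~\ref{th:properness}, which is then deduced from the weight inequality of Theorem~\ref{th:HMP} combined with the Hilbert--Mumford--Paul criterion (Theorem~\ref{th:HMP2}), exactly as the paper does at the end of Section~4. Your explicit auxiliary pair $a=v^{\otimes(m-1)}\otimes u$, $b=w^{\otimes m}$ simply spells out the step the paper leaves implicit, and the verification that $w_\lambda(b)\le w_\lambda(a)$ is precisely \eqref{eq:tian6} is correct.
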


Because the vanishing order of $S_{v,u}$ along each irreducible divisor $D\subset \tilde \bG\backslash \bG$ is always finite and there are
only finitely many of irreducible divisors
$\tilde \bG\backslash \bG$, \eqref{eq:tian1} is the same as saying: For any irreducible divisor $D\subset \tilde \bG\backslash \bG$, $S_{v,w} $ vanishes along $D$ so long as $S_{v,u}$ does. This last statement is plausible, but a direct proof is not available. Instead, we will prove Theorem \ref{th:properness} or \ref{th:properness2} through consideration of maximal tori in $\bG$.

\section{Proof of main theorem}

In this section, we prove Theorem \ref{th:HMP} and Theorem \ref{th:properness}.
We will adopt the notations in last section. Let $\bf T$ be a maximal torus of $\bG$, then any other maximal torus $\bf T'$ is of the form
$\bf T'=\tau\cdot \bf T\cdot \tau ^{-1}$.
We will fix a maximal torus $\bf T$. It admits a compactification $\bar \bf T$ which is simply a product of $\mathbb C P^1$ in the case of $\bf G=\bf {SL}(N+1,\mathbb C)$.
Put
$$\bf P\,=\, \tilde{\mathbb{P}}(\bf V, \bf W)\times \tilde{\mathbb{P}}(\bf V, \bf U).$$
Consider the holomorphic map induced by the $\bf G$-action:
$$f: \bf T\times \bf P\,\mapsto \,\bf P,~~~f(\sigma, x)\,=\,\sigma(x).$$
By \eqref{eq:coefs}, this map can be made of polynomials on $\bf G=\bf {SL}(N+1,\mathbb C)$, so there is a blow-up variety
$\pi:\tilde Z \mapsto Z$, where $Z = \bar \bf T\times \bf P$, such that it extends to a holomorphic map
$$\tilde f: \tilde Z \,\mapsto\,\bf P.$$
For each $x\in \bf P$, its preimage in $\bf T\times\bf P$ is given by
$$f^{-1}(x)\,=\,\{(\sigma^{-1}, \sigma(x))~|~ \sigma\in \bf T\}\subset \bf T\times \bf P.$$
This is isomorphic to $\bf T$. We denote by $\tilde \bf T_x$ the closure of $f^{-1}(x)$ in $\tilde Z$, clearly, we have $\tilde{ \bf T}_x\subset \tilde f^{-1}(x)$ which is
closed in $\tilde Z$.

\begin{lemm}\label{lemm:bound}
Let $\omega$ be a fixed K\"ahler metric on $\tilde Z$, then there is a constant $C=C(\omega, \bf T)$, which is independent of $x\in \bf P$, satisfying:
$$ \int_{\tilde \bf T_x} \,\omega^N\,\le\,C,$$
where $N$ is the dimension of $\bf T$.
\end{lemm}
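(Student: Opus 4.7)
The plan is to view $\{\tilde{\bf T}_x\}_{x\in\bf P}$ as an algebraic family of $N$-dimensional cycles in the smooth projective variety $\tilde Z$, parametrized by the irreducible base $\bf P$, and to bound their $\omega$-volumes by the constant generic value via a cohomological interpretation. Since $\omega^N$ is a closed $(N,N)$-form, $\int_{\tilde{\bf T}_x}\omega^N$ depends only on the homology class $[\tilde{\bf T}_x]\in H_{2N}(\tilde Z,\mathbb Z)$, and since $\omega$ is K\"ahler this pairing is non-negative on every irreducible $N$-dimensional subvariety. Hence it suffices to dominate $[\tilde{\bf T}_x]$ in the effective cone by a single class independent of $x$.

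To achieve this I would first form the incidence subvariety
$$\mathcal I \,:=\, \overline{\{(x,z)\in \bf P\times \tilde Z \,:\, z=\pi^{-1}(\sigma^{-1},\sigma(x))\text{ for some }\sigma\in\bf T\}} \,\subset\, \bf P\times\tilde Z,$$
which is the Zariski closure of the image of a polynomial map $\bf P\times\bf T\to \bf P\times \tilde Z$. It is irreducible of dimension $\dim\bf P+N$, and the first projection $q:\mathcal I\to \bf P$ is proper with generic fibre $\{x\}\times\tilde{\bf T}_x$ of pure dimension $N$. By Hironaka's flattening theorem, there is a proper birational modification $\mu:\bf P'\to \bf P$ such that the strict-transform family $\mathcal I'\to \bf P'$ is flat with fibres of pure dimension $N$. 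Flatness implies that the scheme-theoretic fibre cycle class $[\mathcal I'_{x'}]$ is constant in $x'\in \bf P'$, equal to some $[\mathcal T]\in H_{2N}(\tilde Z,\mathbb Z)$; put $C_0:=\langle [\omega]^N,[\mathcal T]\rangle$.

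Finally, fix any $x\in \bf P$ and any $x'\in \mu^{-1}(x)$. The flat fibre $\mathcal I'_{x'}$ contains the proper transform of $\tilde{\bf T}_x$ as an irreducible component, so in the effective decomposition $[\mathcal I'_{x'}]=\sum_i m_i[Z_i]$ with $m_i\ge 1$ the orbit-closure class $[\tilde{\bf T}_x]$ appears as one of the summands. Consequently $[\tilde{\bf T}_x]\le [\mathcal T]$ in the effective cone, and non-negativity of $\omega^N$ on each $Z_i$ gives the uniform bound $\int_{\tilde{\bf T}_x}\omega^N\le C_0$ as required. The main technical obstacle is precisely the verification that $\tilde{\bf T}_x$ appears with multiplicity $\ge 1$ as an irreducible component of $\mathcal I'_{x'}$---that is, that the orbit closure is not absorbed into some higher-dimensional or phantom component over the non-flat locus of $q$. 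This is the content of the conservation-of-number principle, guaranteed here by the irreducibility of $\mathcal I$ and the fact that $\tilde{\bf T}_x$ already has the generic fibre dimension $N$. Once these points are verified, the lemma follows with $C=C_0=C(\omega,\bf T)$.
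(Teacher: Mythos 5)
Your proposal is correct in outline and, at bottom, follows the same two-step strategy as the paper: the generic members of the family $x\mapsto \tilde{\bf T}_x$ all lie in one homology class, so their $\omega^N$-volumes equal a single constant, and the remaining orbit closures are bounded by exhibiting each as a component of a limit of generic fibres. The difference is in how the limit is taken: the paper picks a sequence $x_i\to x$ of generic points, uses the uniform volume bound to extract a convergent subsequence of the subvarieties $\tilde{\bf T}_{x_i}$ with limit cycle $D_\infty$ of the same volume, and observes that $\tilde{\bf T}_x$ sits inside $D_\infty$ as a component; you instead build the incidence variety $\mathcal I$, flatten it by Hironaka, and use constancy of the fibre cycle class in a flat family together with positivity of $[\omega]^N$ on effective $N$-cycles. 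Your algebraic route avoids the analytic compactness step and yields the bound for all $x$ simultaneously, but both arguments hinge on the identical key point, which you rightly isolate: that $\tilde{\bf T}_x$ occurs with multiplicity at least $1$ in the limit. Your stated justification (irreducibility of $\mathcal I$ plus the generic fibre dimension) is not quite sufficient on its own, since the flat limit taken at one particular $x'\in\mu^{-1}(x)$ could a priori capture only part of the set-theoretic fibre $\mathcal I_x$. The clean fix is to note that for each fixed $\sigma\in\bf T$ the assignment $y\mapsto \bigl(y,\pi^{-1}(\sigma^{-1},\sigma(y))\bigr)$ is a global section of $q$ whose base change to $\bf P'$ lies in the strict transform $\mathcal I'$; hence every fibre $\mathcal I'_{x'}$ with $\mu(x')=x$ contains all of $f^{-1}(x)$, therefore its closure $\tilde{\bf T}_x$, and therefore --- being pure of dimension $N$ by flatness --- contains $\tilde{\bf T}_x$ as an irreducible component. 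With that inserted your argument is complete; the paper glosses over exactly the same point with the word ``clearly.''
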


\begin{proof}
There is a subvariety $B\subset \bP$ of complex codimension at least $2$ such that for any $x$ outside $B$,
$\tilde f^{-1}(x)$ is a subvariety of dimension $N$ and consequently, it coincides with $\tilde {\bf T}_x$. Clearly, all $\tilde {\bf T}_x$ for $x\in \bf P\backslash B$ are  homologous to each other, so we have
\begin{equation}\label{eq:integral-1}
\int_{\tilde { \bf T}_x} \,\omega^N\,=\,C(\omega,\bf T), ~~~\forall x\in \bf P\backslash B,
\end{equation}
i.e., these integrals stay as a constant.
For any $x\in B$, we can choose a sequence of $x_i\in \bP\backslash B$ converging to $x$ such that
$\lim x_i = x$. By \eqref{eq:integral-1}, $\tilde{\bf T}_{x_i}$ form a bounded family in $\bf P$, so by taking a subsequence if necessary, we may assume that
$\tilde{ \bf T}_{x_i}$ converge to a subvariety (possibly non-reduced) $D_\infty$ in $\tilde Z$. Clearly, $\tilde{\bf T}_{x}$ is contained in $D_\infty$ as one component. Hence, by using \eqref{eq:integral-1} for $x_i$, we get
\begin{equation}\label{eq:integral-2}
\int_{\tilde {\bf T}_x} \,\omega^N\,\le \,C.
\end{equation}
The lemma is proved.

\end{proof}

Next we recall $\bf L$ over $\tilde{\mathbb{P}}(\bf V, \bf W)$ and $\bf L'$ over $\tilde{\mathbb{P}}(\bf V, \bf U)$ introduced in last section. Through projections, they pull back to two line bundles $\tilde {\bf L}$ and $\tilde {\bf L}'$ over $\tilde Z$. For any $\tau\in \bf G$, we put $x=(\tau([v,w]),\tau([v,u]))$, then restrictions of $\tilde \bf L$ and $\tilde \bf L'$ to each $\tilde {\bf T}_x\subset \tilde Z$ have two holomorphic sections $S_{v,w}$ and $S_{v,u}$ constructed in a similar way as we did in last section. Actually, if we identify $\tilde T_x$ with a closure of the orbit $\bf T\cdot\tau \subset \bf G$ in $\tilde {\bf G}$, then these are simply those $S_{v,w}$ and $S_{v,u}$ over $\tilde{\bf G}$ from last section restricted to $\tilde {\bf T}_x$. Note that both $\tilde {\bf L}$ and $\tilde {\bf L}'$ have naturally induced Hermitian metrics $||\cdot||_{\tilde{\bf L}}$ and $||\cdot||_{\tilde{\bf L}'}$.

\begin{prop}\label{prop:tian1}
There is an uniform integer $m>0$ such that for $\tau\in \bG$, there is a constant $C_x$,
\begin{equation}\label{eq:tian3}
||S_{v,w}||_{\tilde{\bf L}}^{m}\,\le\, C_x\,||S_{v,u}||_{\tilde{\bf L}'}~~~{\rm on}~~\bar{ \bf T}_x.
\end{equation}

\end{prop}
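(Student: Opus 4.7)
The plan is to convert the pointwise inequality on $\bar{\bT}_x$ into a comparison of vanishing orders of $S_{v,w}$ and $S_{v,u}$ along the boundary divisors of this toric variety, and then to extract the uniform exponent $m$ from a polyhedral analysis that packages the K-stability hypothesis. First I would observe that $\bar{\bT}_x$ is a projective toric $\bT$-variety compactifying the orbit $\bT\cdot\tau$, and that both $S_{v,w}$ and $S_{v,u}$ are nowhere vanishing on this open orbit (since $v,w,u$ are all non-zero). Their zero loci are therefore supported on the boundary $\partial\bar{\bT}_x$. For the primitive 1-PS $\lambda\in\bT$ associated to a boundary divisor $D_\lambda$, the asymptotic $\|\lambda(t)\tau(v)\|^2\sim|t|^{2w_\lambda(\tau v)}$ (and the analogous ones for $w$ and $u$) immediately yields
\[
\mathrm{ord}_{D_\lambda}(S_{v,w})=w_\lambda(\tau v)-w_\lambda(\tau w),\qquad
\mathrm{ord}_{D_\lambda}(S_{v,u})=w_\lambda(\tau v)-q\,w_\lambda(I),
\]
using $w_\lambda(\tau u)=q\,w_\lambda(\tau)=q\,w_\lambda(I)$, valid because $\tau\in\bG$ has a non-zero entry in every row.

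Setting $\mu:=\tau^{-1}\lambda\tau$ and using the identity $w_\lambda(\tau\cdot)=w_\mu(\cdot)$ (together with $w_\lambda(I)=w_\mu(I)$, which follows from the eigenvalues of $\mu(t)$ coinciding with those of $\lambda(t)$), the K-stability of $(v,w)$ applied to $\mu$ gives the strict implication $w_\mu(v)>q\,w_\mu(I)\Rightarrow w_\mu(v)>w_\mu(w)$, while K-semistability supplies $w_\mu(v)\ge w_\mu(w)$ in general. To promote this positivity to the linear estimate $m\cdot\mathrm{ord}_{D_\lambda}(S_{v,w})\ge\mathrm{ord}_{D_\lambda}(S_{v,u})$ with $m$ independent of $\tau$, I would exploit that $A(\tau v)\subseteq A$ lies in the fixed finite set $A$ of $\bT$-weights of $\bV$, so only finitely many weight polytopes $\cN(\tau v)$ appear as $\tau$ varies over $\bG$ (and likewise for $\cN(\tau w)$). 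On each maximal cone $\sigma$ of the common refinement of the normal fans of $\cN(\tau v)$, $\cN(\tau w)$, $\cN(I)$, the functionals $a_\lambda:=w_\lambda(\tau v)-w_\lambda(\tau w)$ and $b_\lambda:=w_\lambda(\tau v)-q\,w_\lambda(I)$ are linear and non-negative, and satisfy $b_\lambda>0\Rightarrow a_\lambda>0$. A compactness argument on the bounded slice $\sigma\cap\{b_\lambda=1\}$ then gives a positive lower bound on the continuous, strictly positive function $a_\lambda$, producing an integer $m_\sigma$ with $m_\sigma a_\lambda\ge b_\lambda$ on all of $\sigma$; the maximum over the finitely many cones and polytope configurations supplies the uniform $m$.

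Finally, once $m\cdot\mathrm{ord}_D(S_{v,w})\ge\mathrm{ord}_D(S_{v,u})$ at every boundary divisor $D\subset\partial\bar{\bT}_x$, the meromorphic section $S_{v,u}\otimes S_{v,w}^{-m}$ of $\tilde{\bL}'\otimes\tilde{\bL}^{-m}$ extends holomorphically across $\partial\bar{\bT}_x$, and its boundedness on the compact projective variety $\bar{\bT}_x$ (with respect to the induced Hermitian metrics) supplies a constant $C_x$ realizing $\|S_{v,w}\|_{\tilde{\bL}}^m\le C_x\|S_{v,u}\|_{\tilde{\bL}'}$. The main obstacle is the uniform polyhedral estimate—ensuring that $m$ is independent of $\tau$—which is ultimately resolved by the finiteness of weight-polytope configurations arising as $\tau$ runs through $\bG$, together with the linear-programming step that turns the strict K-stability implication $b_\lambda>0\Rightarrow a_\lambda>0$ into a quantitative slope bound on each polyhedral cone.
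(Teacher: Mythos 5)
Your proposal is essentially correct, but it proves the proposition by a genuinely different route from the one the paper takes in Section 4. The paper's own proof is in two qualitatively separate steps: (a) a variant of Richardson's lemma (Lemma \ref{lemm:richardson}, proved in Appendix A) shows that every boundary divisor $D\subset\tilde{\bf T}_x\setminus{\bf T}(x)$ is reached by a one-parameter subgroup of $\bf T$, so K-stability forces $S_{v,w}$ to vanish along $D$ whenever $S_{v,u}$ does; and (b) the uniform exponent $m$ is produced not combinatorially but intersection-theoretically, by integrating the Lelong--Poincar\'e identity \eqref{eq:tian4} against $\omega^{N-1}$ and invoking the uniform volume bound of Lemma \ref{lemm:bound} to cap the vanishing order of $S_{v,u}$ along every boundary divisor; the inequality $m\,\mathrm{ord}_D(S_{v,w})\ge \mathrm{ord}_D(S_{v,u})$ then follows from the crude implication ``$\mathrm{ord}_D(S_{v,u})\ge 1\Rightarrow\mathrm{ord}_D(S_{v,w})\ge 1$.'' You instead obtain the slope bound $m\,a_\lambda\ge b_\lambda$ directly from the finiteness of the weight-polytope configurations $\mathcal N(\tau v),\mathcal N(\tau w)\subset\mathrm{conv}(A)$ and a linear-programming argument on the cones of the common refinement of normal fans; this is precisely the alternative strategy the author sketches in Section 6 and which is carried out in Appendix B (Proposition \ref{polytope-stability} and Theorem \ref{unistthm}). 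Your version is more elementary and makes the source of uniformity transparent (and in principle effective), at the cost of the toric bookkeeping needed to identify boundary divisors of (the normalization of) $\bar{\bf T}_x$ with rays of the refined fan and to justify the order formulas; the paper's version avoids that combinatorics but needs the volume-degeneration Lemma \ref{lemm:bound} and the Appendix A argument.

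Two small repairs are needed in your write-up. First, the slice $\sigma\cap\{b_\lambda=1\}$ need not be bounded (it is unbounded exactly when $b$ vanishes on a nonzero face of $\sigma$), so the compactness step as stated does not go through; instead, check the inequality on the finitely many primitive ray generators $\rho_i$ of $\sigma$: density of rational points in the rational subcone $\sigma\cap\{a=0\}$ upgrades the lattice-point implication $b>0\Rightarrow a>0$ to the set inclusion $\sigma\cap\{a=0\}\subseteq\{b=0\}$, after which $m_\sigma:=\max_i\lceil b(\rho_i)/a(\rho_i)\rceil$ (over rays with $a(\rho_i)>0$) works by linearity. Second, in the last paragraph the section that extends holomorphically is $S_{v,w}^{\otimes m}\otimes S_{v,u}^{-1}$ (a section of $\tilde{\bf L}^{m}\otimes(\tilde{\bf L}')^{-1}$), not $S_{v,u}\otimes S_{v,w}^{-m}$; with your order inequality $m\,\mathrm{ord}_D(S_{v,w})\ge\mathrm{ord}_D(S_{v,u})$ the former is the bounded one, and its boundedness is exactly \eqref{eq:tian3}.
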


\begin{proof}
We will prove \eqref{eq:tian3} in two steps. Without loss of generality, we may take $\tau =Id$.
First we claim that for any irreducible divisor $D\subset \tilde {\bf T}_x\backslash \bf T(x)$, $S_{v,w}$ vanishes along $D$ whenever $S_{v,u}=0$ on $D$. This can be proved by applying a variant of Richardson's lemma which already appeared in \cite{paul13}.

\begin{lemm}\label{lemm:richardson}
For any $z\in \tilde{ \bf T}_x\backslash \bf T (x)$, there is a one-parameter subgroup $\lambda: \mathbb C^* \mapsto \bf T$ such that $\lambda(t)( x) \rightarrow \varsigma(z)$ for some $\varsigma\in \bf T$ as $t \rightarrow 0$.
\end{lemm}

Now we choose $z$ to be a generic point in $D$, by the above lemma, we have a one-parameter subgroup $\lambda$ satisfying:
$$\lim_{t\to 0} \lambda(t)( x)\, = \,\varsigma (z).$$
Since $D$ is $\bf T$-invariant, $\varsigma(z)$ is a generic point in $D$. By the K-stability condition, $ S_{v,w}$ restricted to the closure of $\lambda(\mathbb C^*)$ must vanish at $\varsigma(z)$, so $ S_{v,w}$ vanishes along $D$.

Next we show that there is an uniform upper bound on vanishing order of $S_{v,u}$ along any irreducible components of $\tilde {\bf T}_x\backslash {\bf T}( x)$. To see this, we take the K\"ahler metric $\omega$ on $\tilde Z$ as in Lemma \ref{lemm:bound} and denote by $R_{\tilde\bf L'}$ the curvature form of the Hermitian metric $||\cdot||_{\tilde\bf L'}$.
Then we have
\begin{equation}\label{eq:tian4}
\sum_{i} m_i\,[D_i]\,-\,R_{\tilde{\bf L}'} \,=\,\sqrt{-1} \,\partial\bar\partial \log || S_{v,u}||^2_{\tilde{\bf L}'},
\end{equation}
where $\{D_i\}_i$ is the collection of irreducible components of $\tilde {\bf T}_x\backslash \bf T(x)$ and each $m_i$ is the vanishing order of $S_{v,u}$ along $D_i$.
Integrating $\omega^{N-1}$ on both sides of \eqref{eq:tian4}, we get
\begin{equation}\label{eq:tian5}
\sum_{i} m_i\,\int_{D_i} \,\omega^{N-1}\,=\,\int_{\tilde{ \bf T}_x} \,R_{\tilde{\bf L}'}\wedge \omega^{N-1}.
\end{equation}
Since the curvature $R_{\tilde {\bf L}'}$ is bounded by a multiple of $\omega$, by Lemma \ref{lemm:bound},
the right side of \eqref{eq:tian5} is bounded from above by an uniform constant. Hence,
if we let $m$ be a positive integer bigger than the right side of \eqref{eq:tian5}, we have
$m_i\le m$ for each $i$. We have seen above that $S_{v,w}$ vanishes along $D_i$ so long as $m_i\ge 1$, so $ S_{v,w}^m \cdot S_{v,u}^{-1} $ extends to a holomorphic section of $\tilde{\bf L}\otimes (\tilde{\bf L}')^{-1}$ over $\tilde{\bf T}_x$. Thus \eqref{eq:tian3} holds and our proposition is proved.

\end{proof}

Now let $m$ be given in Proposition \ref{prop:tian1}, put $x=([v,w],[v,u])$ and consider a function $F_x$ on $\bG$:
\begin{equation}\label{eq:tian7}
F_x(\sigma)\,=\,m(\log ||\sigma(w)||^2 - \log ||\sigma(v)||^2)\,-\,(\log ||\sigma(u)||^2-\log ||\sigma(v)||^2).
\end{equation}
It is the same as
\begin{equation}\label{eq:tian8}
F_x(\sigma)\,=\,-\,\log \left(\frac{||S_{v,w}||_{\bf L}^{2m}}{||S_{v,u}||_{\bf L'}^2}\right).
\end{equation}
For each one-parameter subgroup $\lambda\subset \bf G$, we have
\begin{equation}\label{eq:tian9}
F_x(\lambda(t))\,=\,[m(w_\lambda(w)- w_{\lambda}(v)) - (q \,w_{\lambda}(I)- w_{\lambda}(v))]\,\log|t|^2\,+\,{\mathcal O}(1),
\end{equation}
where ${\mathcal O}(1)$ denotes a bounded quantity.

Each one-parameter subgroup $\lambda$ is contained in a maximal torus $\tau\cdot \bf T\cdot\tau^{-1}$
of $\bf G$. Then $\bar\lambda = \tau^{-1} \cdot \lambda\cdot \tau$ is an one-parameter subgroup in $\bf T$. Note that
$$F_x(\lambda(t))\,=\,F_x(\tau\cdot\bar\lambda\cdot \tau^{-1}).$$
Using this and \eqref{eq:tian7}, we can deduce the following expansion as we did in \eqref{eq:tian9},
\begin{equation}\label{eq:tian9-1}
F_x(\lambda(t))\,=\,[m(w_{\bar\lambda}(\bar w)- w_{\bar\lambda}(\bar v)) - (q w_{\bar\lambda}(\tau)- w_{\bar\lambda}(\bar v))]\log|t|^2+{\mathcal O}(1),
\end{equation}
where $\bar w = \tau^{-1}(w)$ and $\bar v=\tau^{-1}(v)$.
This, together with \eqref{eq:tian9}, implies that the coefficients in front of $\log |t|^2$ in \eqref{eq:tian9} and \eqref{eq:tian9-1} are equal. In fact, by similar arguments using expansions, we have
\begin{eqnarray*}
w_\lambda(w)- w_{\lambda}(v) &=& w_{\bar\lambda}(\bar w)- w_{\bar\lambda}(\bar v))\\
q \,w_{\lambda}(I)- w_{\lambda}(v)&=& q \,w_{\bar\lambda}(\iota)- w_{\bar\lambda}(\bar v),
\end{eqnarray*}
where $\iota\,=\,\tau^{-1}$. We also have
\begin{equation}\label{eq:tian10}
F_{\iota(x)}(\bar\lambda(t))\,=\,[m(w_{\bar\lambda}(\bar w)- w_{\bar\lambda}(\bar v)) - (q \,w_{\bar\lambda}(\iota)- w_{\bar\lambda}(\bar v))]\, \log|t|^2
+{\mathcal O}(1).
\end{equation}

By \eqref{eq:tian8} and Proposition \ref{prop:tian1}, $F_{\iota(x)}(\bar\lambda(t))=F_x(\bar\lambda(t)\cdot\iota)$ is bounded from below on $\bf T$. It follows from this and
\eqref{eq:tian10} that
$$m(w_{\bar \lambda}(\bar w)- w_{\bar \lambda}(\bar v)) - (q \,w_{\bar \lambda}(\iota)- w_{\bar \lambda}(\bar v))\,\le \,0.
$$
Consequently, we have
\begin{equation}\label{eq:tian6}
m(w_\lambda(w)- w_{\lambda}(v)) - (q \,w_{\lambda}(I)- w_{\lambda}(v))\,\le \,0.
\end{equation}
This concludes the proof of Theorem \ref{th:HMP}.

Next, by using \eqref{eq:tian6} and Theorem \ref{th:HMP2} (also \cite{bhj16}, Theorem 5.4 (ii)), we conclude the proof of Theorem \ref{th:properness}

\section{K-stability vs CM-stability}

In this section, as an application of Theorem \ref{th:HMP}, we give a proof of the main theorem in \cite{tian14}.\footnote{The proof in \cite{tian14} needs additional arguments for the reduction from $\bG$ to it maximal tori. It was thought that one can complete this reduction by applying a result in \cite{paul12a}. It turns out that additional arguments are needed and provided here.} We will follow closely discussions in \cite{tian14}.

Let $M$ be a projective manifold polarized by an ample line bundle $L$.
By the Kodaira embedding theorem, for $\ell$
sufficiently large, a basis of $H^0(M, L^\ell)$ gives an embedding
$\phi_\ell: M \mapsto \CC P^{N}$, where $N=\dim_\CC
H^0(M,L^\ell)-1$. Any other basis gives an embedding of the form
$\sigma\cdot \phi_\ell$, where $\sigma\in \bf G=\bf {SL}(N+1, \CC)$. We fix such an embedding.

The CM-stability introduced in \cite{tian97} is defined in terms of Mabuchi's K-energy:
\begin{equation}
\label{eq:cm-1}
\bM_{\omega_0}(\varphi)\,=\,-\frac{1}{V}\,\int_0^1 \int_M\, \varphi \,({\rm Ric}(\omega_{t\varphi}) - \mu \,\omega_{t\varphi} )\wedge \omega_{t\varphi}^{n-1} \wedge dt,
\end{equation}
where $\omega_0$ is a K\"aher metric with K\"ahler class $2\pi c_1(L)$ and
\begin{equation}\label{eq:mu}
\omega_\varphi\,= \,\omega_0 \,+\,\sqrt{-1}\,\partial\bar\partial \,\varphi\,
~~{\rm and}~~ \,\mu\,=\,\frac{c_1(M)\cdot c_1(L)^{n-1}}{c_1(L)^n}.
\end{equation}
Given an embedding $M\subset \mathbb C P^N$ by $K_M^{-\ell}$, we have an induced function on $\bf G\,=\,\bf {SL}(N+1,\CC)$ which acts on $\mathbb C P^N$:
\begin{equation}\label{eq:cm-2}
\bf F(\sigma)\,=\,\bM_{\omega_0}(\psi_\sigma),
\end{equation}
where $\psi_\sigma$ is defined by
\begin{equation}\label{eq:cm-3}
\frac{1}{\ell}\,\sigma^*\omega_{FS}\,=\,\omega_0\,+\, \sqrt{-1}\,\partial\bar\partial\, \psi_\sigma.
\end{equation}
Note that $\bF(\sigma)$ is well-defined since $\psi_\sigma$ is unique modulo addition of constants. Similarly, we can define
$\bf J$ on $\bG$ by
\begin{equation}\label{eq:cm-3'}
\bf J(\sigma)\, =\, \bf J_{\omega_0}(\psi_\sigma),
\end{equation}
where
\begin{equation}
\label{eq:cm-3''}
J_{\omega_0}(\varphi)\,=\, \sum_{i=0}^{n-1} \frac{i+1}{n+1} \int_M
\sqrt{-1} \, \partial \varphi\wedge \bar{\partial} \varphi \wedge
\omega_0^i\wedge \omega_\varphi^{n-i-1}.
\end{equation}

\begin{defi}\label{defi:cm-1}
We call $M$ CM-semistable with respect to $L^{\ell}$ if $\bF$ is bounded from below and
CM-stable with respect to $L^{\ell}$ if $\bF$ bounded from below and is proper modulo $\bf J$, i.e.,
for any sequence $\sigma_i\in \bG$,
\begin{equation}\label{eq:cm-defi}
\bf F(\sigma_i) \rightarrow\infty~{\rm whenever}~\bf J(\sigma_i) \rightarrow \infty.
\end{equation}
We say $(M,L)$ CM-stable (resp. CM-semistable) if $M$ is CM-stable (resp. CM-semistable) with respect to $L^{\ell}$ for all sufficiently large $\ell$.
\end{defi}

\begin{theo}\label{th:k-cm-1}
Let $(M,L)$ be a polarized projective manifold which is K-stable. Then $M$ is CM-stable with respect to any $L^\ell$ which is
very ample. In particular, $(M,L)$ is CM-stable.
\end{theo}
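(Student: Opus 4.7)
The plan is to exhibit the K-energy $\bf F$ and the auxiliary functional $\bf J$ on $\bG$ as, up to a uniformly bounded error, logarithms of norms of distinguished vectors in rational representations of $\bG$, and then to invoke Theorem \ref{th:properness}. Following S. Paul \cite{paul12a,paul13} together with the asymptotic expansions of Mabuchi's K-energy along Bergman metrics established in \cite{tian97}, for the embedding $\phi_\ell:M\hookrightarrow \CC P^N$ by $L^\ell$ there exist rational representations $\bV,\bW$ of $\bG$ and nonzero vectors $v_M\in\bV$, $w_M\in\bW$ (constructed from the Chow form and the hyperdiscriminant of the pair $(M,L^\ell)$) such that, modulo a bounded function on $\bG$,
\[
\bf F(\sigma)\,=\,p_{v_M,w_M}(\sigma)\,+\,\mathcal O(1),\qquad \bf J(\sigma)\,=\,\deg(\bV)\,\log\|\sigma\|^2\,-\,\log\|\sigma(v_M)\|^2\,+\,\mathcal O(1).
\]

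Next I would verify that K-stability of $(M,L)$ in the classical Tian--Donaldson sense translates verbatim into K-stability of the pair $(v_M,w_M)$ in the sense of Definition \ref{defi:p-stable}. For any one-parameter subgroup $\lambda\subset\bG$, the generalised Futaki invariant of the associated test configuration is proportional to $w_\lambda(v_M)-w_\lambda(w_M)$, while the test configuration is trivial (product) precisely when $\deg(\bV)\,w_\lambda(I)=w_\lambda(v_M)$. This dictionary is Paul's; the subtle point, flagged in the footnote of the theorem, is that classical K-stability quantifies only over one-parameter subgroups in a fixed maximal torus whereas Paul's pair-K-stability quantifies over \emph{every} one-parameter subgroup of $\bG$. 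Proposition \ref{prop:tian1}, which is valid on every maximal torus $\tau\cdot\bf T\cdot\tau^{-1}$ simultaneously with a uniform exponent $m$, provides exactly the reduction required to pass from tori to all of $\bG$.

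With this identification in place, Theorem \ref{th:properness} applied to the K-stable pair $(v_M,w_M)$ produces an integer $m>0$ and a constant $C$ with
\[
m\,p_{v_M,w_M}(\sigma)\,\ge\,\deg(\bV)\,\log\|\sigma\|^2\,-\,\log\|\sigma(v_M)\|^2\,-\,C,\qquad \sigma\in\bG.
\]
Substituting the Tian--Paul expansions above yields $m\,\bf F(\sigma)\ge \bf J(\sigma)-C'$ for a new constant $C'$. Since $\bf J\ge 0$, this estimate simultaneously bounds $\bf F$ from below and forces $\bf F(\sigma_i)\to\infty$ whenever $\bf J(\sigma_i)\to\infty$. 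By Definition \ref{defi:cm-1} this is CM-stability of $M$ with respect to $L^\ell$, hence of $(M,L)$.

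The principal obstacle is not in the chain of implications, which is essentially formal once the dictionary is set up, but in the asymptotic identities for $\bf F$ and $\bf J$: one has to pin down the correct representations $\bV,\bW$ and the correct Chow/hyperdiscriminant vectors, and then verify that the $\mathcal O(1)$ error in the expansions is uniform over all of $\bG$ and not just along fibres of a torus. These expansions are the technical core of \cite{tian97,paul12a}; once they are invoked, the properness estimate follows cleanly from Theorem \ref{th:properness}.
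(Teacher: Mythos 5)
Your proposal follows essentially the same route as the paper: identify $\bf F$ and $\bf J$, up to bounded error, with $p_{R(M),\Delta(M)}$ and $p_{R(M),I^r}$ for the Chow form/hyperdiscriminant pair (the paper's \eqref{eq:cm-8} and \eqref{eq:cm-7}), deduce K-stability of the pair from properness of $\bf F$ along one-parameter subgroups (Theorem \ref{th:k-2}) via the expansions \eqref{eq:cm-8'} and \eqref{eq:cm-7'}, and then apply Theorem \ref{th:properness}. The argument is correct; the only cosmetic difference is that the paper keeps the normalizing constant $a_n$ in \eqref{eq:cm-8}, which does not affect the conclusion.
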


We refer the readers to \cite{tian13-2} for the definition of the K-stability. Clearly, Theorem \ref{th:k-cm-1} follows from
the following theorem.

\begin{theo}\label{th:k-cm-2}
Let $(M,L)$ be a polarized projective manifold which is K-stable with respect to $L^\ell$. Then there are positive constants $\delta$ and $C$, which may depend on $\ell$, such that
\begin{equation}\label{eq:cm-defi-uni}
\bf F(\sigma) \,\ge\,\delta\,\bf J(\sigma)\,-\,C~~~{\rm on}~\bf G.
\end{equation}
\end{theo}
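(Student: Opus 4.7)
The plan is to reduce Theorem \ref{th:k-cm-2} to the uniform K-stability statement of Theorem \ref{th:properness} by identifying the Mabuchi K-energy $\bf F$ and the functional $\bf J$ — viewed as functions on the Bergman orbit of $\omega_0$ parametrised by $\bG$ — with the universal quantities $p_{v,w}$ and $\deg(\bV)\log\|\sigma\|^2-\log\|\sigma(v)\|^2$ attached to a well-chosen pair of vectors in rational $\bG$-representations. For the embedding $M\subset\CC P^N$ given by $L^\ell$, I would take $v = R_M$, the $M$-resultant (Cayley-Chow form), and $w = \Delta_M$, the $M$-hyperdiscriminant; these live in rational representations $\bV$ and $\bW$ of $\bG = \bf{SL}(N+1,\CC)$. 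After replacing $w$ by a suitable tensor power so that $\deg(\bV)=\deg(\bW)$ — which guarantees that $p_{v,w}$ descends from $\bG$ to $\bG/\CC^*$ — Paul's asymptotic formula for the Mabuchi energy along Bergman orbits then gives
\[
\bf F(\sigma)\,=\,c_1\,p_{v,w}(\sigma)\,+\,O(1)
\]
for an explicit positive constant $c_1$ depending only on $\ell$ and on the invariants of $(M,L)$.

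Next I would translate K-stability of $(M,L^\ell)$ in the sense of \cite{tian13-2} into K-stability of the pair $(v,w)$ in the sense of Definition \ref{defi:p-stable}: every one-parameter subgroup $\lambda$ of $\bG$ determines a test configuration whose (generalized) Donaldson-Futaki invariant is a positive multiple of $w_\lambda(v)-w_\lambda(w)$, and $\lambda$ yields a product test configuration precisely when $\deg(\bV)w_\lambda(I) = w_\lambda(v)$. With this correspondence, Theorem \ref{th:properness}, applied to $(v,w)$, furnishes an integer $m>0$ and a constant $C_0>0$ such that
\[
m\,p_{v,w}(\sigma)\,\ge\,\deg(\bV)\log\|\sigma\|^2\,-\,\log\|\sigma(v)\|^2\,-\,C_0
\]
on all of $\bG$.

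To finish, I would compare the right-hand side with $\bf J(\sigma)$. The quantity $\deg(\bV)\log\|\sigma\|^2$ encodes, up to bounded error, the Aubin-Yau $I$-functional of the Bergman potential $\psi_\sigma$, while $\log\|\sigma(v)\|^2$ encodes the normalised Chow norm of $\sigma\cdot M$; a classical expansion argument going back to \cite{tian97} then yields
\[
\deg(\bV)\log\|\sigma\|^2\,-\,\log\|\sigma(v)\|^2\,=\,c_2\,\bf J(\sigma)\,+\,O(1)
\]
for some positive constant $c_2$. Substituting this together with $\bf F=c_1p_{v,w}+O(1)$ into the previous display produces $\bf F(\sigma)\,\ge\,\delta\,\bf J(\sigma)-C$ with $\delta=c_2/(mc_1)$ and some new $C>0$, which is exactly \eqref{eq:cm-defi-uni}.

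The main obstacle will be the matching step described in the second paragraph: identifying K-stability of $(M,L^\ell)$ via test configurations with K-stability of the vector pair $(R_M,\Delta_M)$ of Definition \ref{defi:p-stable}. The subtle point is the boundary case $\deg(\bV)w_\lambda(I)=w_\lambda(v)$, corresponding to product test configurations, where both the strict-inequality clause in Definition \ref{defi:p-stable} and the appearance of $\deg(\bV)w_\lambda(I)$ in the conclusion of Theorem \ref{th:HMP} are essential — this is precisely the reduction that \cite{tian14} had to revisit and that the present paper makes rigorous by supplying Theorem \ref{th:HMP}.
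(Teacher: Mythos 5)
Your proposal is correct and follows essentially the same route as the paper: you associate to $M$ the pair (Chow form, hyperdiscriminant) raised to powers equalizing the degrees, identify $\bf F$ with $a_n\,p_{R(M),\Delta(M)}$ and $\bf J$ with $p_{R(M),I^r}$ up to bounded error via \eqref{eq:cm-8} and \eqref{eq:cm-7}, show the pair is K-stable in the sense of Definition \ref{defi:p-stable}, and conclude by Theorem \ref{th:properness}. The only cosmetic difference is in the matching step you flag as the main obstacle: where you propose to go through Donaldson--Futaki invariants of test configurations directly, the paper instead invokes Theorem \ref{th:k-2} (properness of $\bf F$ along one-parameter subgroups, from \cite{tian14}) together with the weight expansions \eqref{eq:equ-1} and \eqref{eq:equ-2}, which encode exactly the same correspondence, including the boundary case $r\,w_\lambda(I)=w_\lambda(R(M))$ detected by $\bf J$.
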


The rest of this section is devoted to the proof of Theorem \ref{th:k-cm-2}.

First we recall Theorem 2.4 in \cite{tian14} which relates the K-stability to the asymptotic behavior of the K-energy.

\begin{theo}\label{th:k-2}
If $(M, L)$ is K-stable with respect to $L^\ell$, then
$\bf F$ is proper along any one-parameter subgroup $\lambda$ of $\bf G$.
\end{theo}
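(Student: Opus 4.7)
The plan is to reduce the one-parameter statement to the slope formula for the K-energy along an algebraic family, then invoke the definition of K-stability in terms of positivity of the Donaldson-Futaki invariant.

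First, I would fix a one-parameter subgroup $\lambda:\CC^*\to \bG$. Via the embedding $M\subset\CC P^N$, the Zariski closure of $\{(\lambda(t)\cdot M,\,t)\}_{t\in\CC^*}$ inside $\CC P^N\times\CC$, together with the restriction of $\cO_{\CC P^N}(1)$ and the induced $\CC^*$-action, is a test configuration $(\cX_\lambda,\cL_\lambda)$ of $(M,L^\ell)$ in the sense of Donaldson. Let $F_1(\lambda)$ be its Donaldson-Futaki invariant and $J_1(\lambda)\ge 0$ its Donaldson norm. If $\lambda$ stabilises $M\subset \CC P^N$ up to an element of $\Aut(M,L)$, then $\psi_{\lambda(t)}$ differs from a fixed potential only by pullback under an automorphism and addition of a constant, so both $\bF(\lambda(t))$ and $\bJ(\lambda(t))$ stay bounded and properness along $\lambda$ is vacuous. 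Otherwise $(\cX_\lambda,\cL_\lambda)$ is a nontrivial test configuration.

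Next, I would invoke the slope expansions of $\bF$ and $\bJ$ along the algebraic family $\psi_{\lambda(t)}$: as $|t|\to 0$,
$$\bF(\lambda(t))\,=\,-\,F_1(\lambda)\,\log|t|^2\,+\,O(1),\qquad \bJ(\lambda(t))\,=\,-\,J_1(\lambda)\,\log|t|^2\,+\,O(1),$$
with the symmetric statement as $|t|\to\infty$ obtained by replacing $\lambda$ by $\lambda^{-1}$. For a nontrivial $\lambda$ we have $J_1(\lambda)>0$, and the K-stability of $(M,L^\ell)$ applied to $(\cX_\lambda,\cL_\lambda)$ gives $F_1(\lambda)>0$. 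Choosing any $\delta_\lambda$ with $0<\delta_\lambda<F_1(\lambda)/J_1(\lambda)$, the two expansions combine to produce a constant $C_\lambda$ such that
$$\bF(\lambda(t))\,\ge\,\delta_\lambda\,\bJ(\lambda(t))\,-\,C_\lambda\qquad \text{for all } t\in \CC^*,$$
which is precisely the required properness of $\bF$ along $\lambda$.

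The hard part is the slope formula itself: identifying the leading coefficient of $\bF(\lambda(t))$ as the Donaldson-Futaki invariant of $(\cX_\lambda,\cL_\lambda)$ with a uniformly bounded remainder, and doing the same for $\bJ$. Following the algebraic framework of S. Paul used throughout \cite{tian14}, one writes $\bF$ on $\bG$ as a difference of the logarithms of norms of $\sigma$ acting on two explicit rational representations of $\bG$ (built from the Chow form and the hyperdiscriminant of $M\subset\CC P^N$), modulo a bounded error coming from the Tian-Yau-Zelditch expansion of the Bergman kernel for $L^\ell$. The asymptotic of each such logarithm along $\lambda$ is then governed by the weight formula \eqref{eq:weight-2}, and the Paul-Tian identity recovers $F_1(\lambda)$ as the difference of the corresponding weights. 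A parallel (and easier) computation handles $\bJ$. Once this algebraic expansion is in place, the deduction above is essentially formal.
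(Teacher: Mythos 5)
First, a caveat on the comparison itself: the paper does not prove Theorem \ref{th:k-2} -- it is recalled verbatim from \cite{tian14} (Theorem 2.4 there) and used as an input to Theorem \ref{th:k-cm-2} -- so there is no in-paper argument to measure yours against. Your strategy (expand $\bF$ and $\bJ$ along the Bergman ray of the test configuration generated by $\lambda$, then invoke positivity of the resulting invariant) is indeed the strategy of \cite{tian14} and is consistent with the machinery of Section 5 here, but as written it has a genuine gap at its crux. The slope formula $\bF(\lambda(t))=-F_1(\lambda)\log|t|^2+O(1)$ with $F_1(\lambda)$ the Donaldson--Futaki invariant of the flat closure $(\cX_\lambda,\cL_\lambda)$ is false in general: when the central fibre is non-reduced, the leading coefficient of $\bF$ along $\lambda$ is the Mabuchi/Paul weight $a_n\bigl(w_\lambda(R(M))-w_\lambda(\Delta(M))\bigr)$ (equivalently the non-Archimedean Mabuchi functional), which is bounded above by the Donaldson--Futaki invariant and can be strictly smaller. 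Hence $F_1(\lambda)>0$ does not by itself yield a positive slope for $\bF$. This is precisely why \cite{tian14} and Section 5 of this paper work with the weights of the pair $(R(M),\Delta(M))$ via \eqref{eq:cm-8}--\eqref{eq:equ-1} rather than with Donaldson--Futaki invariants. To repair your argument you must either pass to a normalized base change with reduced central fibre and apply K-stability to that test configuration, or replace $F_1$ by Tian's CM-weight (defined as the slope itself) -- and in either case you must pin down which definition of K-stability from \cite{tian13-2} you are invoking, since the whole content of the theorem lives in this identification.

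A second, smaller problem is the degenerate case. If $\lambda$ is nontrivial and preserves $M\subset\CC P^N$, then $\lambda$ induces a nontrivial $\CC^*$-action on $M$ (a nontrivial one-parameter subgroup of $\SL(N+1,\CC)$ cannot fix a nondegenerate $M$ pointwise), and then $\bJ(\lambda(t))\to\infty$ while $\bF(\lambda(t))$ remains bounded whenever the Futaki invariant vanishes; properness along $\lambda$ in the sense defined after the theorem would then \emph{fail}, not be vacuous as you claim. The statement survives only because K-stability, applied to the product degenerations of both $\lambda$ and $\lambda^{-1}$, excludes such subgroups (it forces $\Aut(M,L)$ to be discrete); this needs to be said explicitly. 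Relatedly, the implication ``nontrivial test configuration $\Rightarrow J_1(\lambda)>0$'' requires a normalization or minimum-norm caveat, since non-normal test configurations can be nontrivial yet have vanishing norm.
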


Here by properness along $\lambda$, we mean that $\bf F$ is bounded from below along $\lambda$ and for any sequence $t_i\to 0$, $\bf F(\lambda(t_i))$ diverge to $\infty$ whenever $\bf J(\lambda(t_i))\to \infty$.

Next we recall the Chow coordinate and Hyperdiscriminant of $M$ (\cite{paul08}):
Let $G(N-n-1,N)$ the Grassmannian of all $(N-n-1)$-dimensional subspaces in
$\mathbb C P^N$. We define
\begin{equation}\label{eq:chow-1}
Z_M\,=\,\{\, P \in G(N-n-1,N)\,|\,P\cap M\,\not=\,\emptyset\,\}.
\end{equation}
Then $Z_M$ is an irreducible divisor of $G(N-n-1,N)$ and determines a non-zero homogeneous polynomial $R_M \in \CC[M_{(n+1)\times (N+1)}] $,
unique modulo scaling, of degree $(n+1) d$, where $M_{k\times l}$ denotes the space of all $k\times l$ matrices.
We call $R_M$ the Chow coordinate or the $M$-resultant of $M$.

Next consider the Segre embedding:
$$M\times \CC P^{n-1} \subset \CC P^N\times \CC P^{n-1} \mapsto \mathbb{P}(M_{n\times (N+1)}^\vee),$$
where $M_{k\times l}^\vee$ denotes its dual space of $M_{k\times l}$. Then we define
\begin{equation}\label{eq:chow-2}
Y_M\,=\,\{\, H\, \subset\,\mathbb{P}(M_{n\times (N+1)}^\vee)\,|\, T_p(M\times \CC P^{n-1}) \,\subset\, H~{\rm for~some}~p\,\}.
\end{equation}
Then $Y_M$ is a divisor in $\mathbb{P}(M_{n\times (N+1)}^\vee)$ of degree $\bar d\,=\,(n (n+1) - \mu )\, d$. This determines
a homogeneous polynomial $\Delta_M$ in $\CC[M_{n\times (N+1)}]$, unique modulo scaling, of degree $\bar d$. We call $\Delta_M$ the hyperdiscriminant of $M$.

Set
$$r = (n+1) \,d \bar d, ~~\bV\,=\,C_r [M_{(n+1)\times (N+1)}],~~\bW\,=\,C_r [M_{n \times (N+1)}],$$
where $C_r[\CC^k]$ denotes the space of homogeneous polynomials of degree $r$ on $\CC^k$.
Following \cite{paul12b}, we associate $M$ with the pair $(R(M), \Delta(M))$ in $\bV\times \bW$, where
$R(M)=R_M^{\bar d}$ and $\Delta(M)=\Delta_M^{(n+1)d}$.
It follows from \cite{paul08} that
\begin{equation}\label{eq:cm-8}
|\,\bF(\sigma ) \,-\, a_n\,p_{R(M),\Delta(M)}(\sigma) \,|\, \leq\,C,~~~\forall \sigma\in\bG
\end{equation}
where $a_n > 0 $ and $C$ are uniform constants.

For each one-parameter subgroup $\lambda\in \bN_\ZZ$, we have
\begin{equation}\label{eq:cm-8'}
p_{R(M),\Delta(M)}(\lambda(t))\,=\,\left(w_\lambda(\Delta(M))\,-\,w_\lambda(R(M))\right)\,\log |t|^2\,+\,{\mathcal O}(1).
\end{equation}
Since $\bF$ is bounded from below on $\bG$, we deduce from \eqref{eq:cm-8} and \eqref{eq:cm-8'} that
\begin{equation}\label{eq:cm-9}
w_\lambda(R(M))\,-\,w_\lambda(\Delta(M))\,\ge\,0.
\end{equation}
Hence, $(R(M),\Delta(M))$ is K-semistable as a pair. Moreover, we see
\begin{equation} \label{eq:equ-1}
\lim_{t\to 0} \,\bF(\lambda(t) )\,=\,\infty~~\Longleftrightarrow~~w_\lambda(R(M))\,-\,w_\lambda(\Delta(M))\,>\,0.
\end{equation}

On the other hand, by Lemma 3.2 in \cite{tian14}, we have
\begin{equation}\label{eq:cm-7}
|\,\bf J(\sigma) \,-\, p_{R(M), I^r} (\sigma)\,| \,\le\, C,~~~\forall \sigma\in\bf G.
\end{equation}
Here, as in previous sections, $I$ denotes the identity in $\bf {gl}$ and $I^r\in \bf U=\bf {gl}^{\otimes r}$.

For any one-parameter subgroup $\lambda$, we also have
\begin{equation}\label{eq:cm-7'}
p_{R(M),I^r}(\lambda(t))\,=\,\left(r\,w_\lambda(I)\,-\,w_\lambda(R(M))\right)\,\log |t|^2\,+\,{\mathcal O}(1).
\end{equation}
Combining this with \eqref{eq:cm-7}, we have
\begin{equation}\label{eq:equ-2}
\lim_{t\to 0} \,\bf J(\lambda(t))\,=\,\infty~~\Longleftrightarrow ~~w_\lambda(R(M))\,-\,r\,w_\lambda(I)\,>\,0.
\end{equation}
Using \eqref{eq:equ-1}, \eqref{eq:equ-2} and Theorem \ref{th:k-2}, we show that $(R(M),\Delta(M))$ is K-stable as a pair.

Hence, Theorem \ref{th:k-cm-2} follows from Theorem \ref{th:properness}.

\section{A final remark}

In this section, we discuss another approach to proving Theorem \ref{th:HMP}. I strongly believe that this approach can be worked out. If so, the resulting proof would be simpler than the one we gave in Section 4.
We will adopt notations from Section 1 and 2.
For simplicity, we first recall an interpretation of K-semistability of S. Paul in terms of polytopes (cf. \cite{paul08}).
\begin{prop}\label{prop:paul-1}
Let $(v,w)$, $\bf V$, $\bf W$, $\bG$ and $\bf T$ as in Section 1. Then $(v,w)$ is K-semistable if and only if for any $\tau\in \bf G$,
we have
\begin{equation}\label{eq:sp-1}
\mathcal{N}(\tau (v))\subset \mathcal{N}(\tau(w)).
\end{equation}
\end{prop}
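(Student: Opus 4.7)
The plan is to unpack the K-semistability condition torus by torus, identify the resulting numerical inequality as the support-function criterion for polytope containment, and then use conjugacy of maximal tori to sweep over all one-parameter subgroups of $\bG$.

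First I would compute $w_\lambda$ on the level of the fixed torus $\bf T$. Using the weight decomposition $v = \sum_{a \in A(v)} v_a$ from \eqref{eq:decomp}, for a one-parameter subgroup $\lambda = \lambda_\ell$ with $\ell \in \bN_\ZZ$ we have $\lambda(t)(v) = \sum_a t^{(\ell,a)} v_a$, so
\[
w_\lambda(v)\,=\,\min_{a \in A(v)}(\ell,a)\,=\,\min_{y \in \mathcal{N}(v)}(\ell,y),
\]
since a linear functional attains its minimum over a convex polytope at a vertex, and the same for $w$.

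Next I would move from $\bf T$ to an arbitrary maximal torus. Every one-parameter subgroup $\mu$ of $\bG$ is contained in some maximal torus, and all maximal tori are conjugate, so we may write $\mu(t) = \tau \lambda(t) \tau^{-1}$ for some $\tau \in \bG$ and some $\lambda$ in $\bf T$. The identity $\mu(t)(v) = \tau\bigl(\lambda(t)(\tau^{-1}v)\bigr)$ gives $w_\mu(v) = w_\lambda(\tau^{-1}v)$, and likewise for $w$. Therefore K-semistability of $(v,w)$ is equivalent to the statement that for every $\tau \in \bG$ and every one-parameter subgroup $\lambda \subset \bf T$,
\[
w_\lambda(\tau^{-1}w)\,\le\,w_\lambda(\tau^{-1}v).
\]

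For fixed $\tau$, Step 1 rewrites this as $\min_{\mathcal{N}(\tau^{-1}w)}(\ell,\cdot) \le \min_{\mathcal{N}(\tau^{-1}v)}(\ell,\cdot)$ for every $\ell \in \bN_\ZZ$. Both sides are positively homogeneous and continuous in $\ell$, and $\bN_\ZZ \otimes \QQ$ is dense in $\bN_\RR$, so the inequality extends to all $\ell \in \bN_\RR$. The support-function characterization of convex-set containment (applied to $-\ell$, or equivalently to the \emph{lower} support function) then yields $\mathcal{N}(\tau^{-1}v) \subset \mathcal{N}(\tau^{-1}w)$; letting $\tau$ range over $\bG$ and replacing $\tau$ by $\tau^{-1}$ gives \eqref{eq:sp-1}. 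The converse direction is obtained by reversing every step, since all equivalences above are biconditional.

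I do not anticipate a serious obstacle: the main bookkeeping issue is simply keeping track of which polytope contains which (the inequality on $w_\lambda$ is in terms of the minimum rather than the maximum, so containment runs $\mathcal{N}(\tau v) \subset \mathcal{N}(\tau w)$ and not the reverse), and the only non-formal point is the passage from integral one-parameter subgroups to all of $\bN_\RR$, which is handled by the homogeneity/density argument above.
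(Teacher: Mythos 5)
Your proof is correct and follows essentially the same route as the paper: express $w_\lambda$ as the minimum of the pairing $(\ell,\cdot)$ over the weight polytope, reduce every one-parameter subgroup to the fixed maximal torus by conjugacy, and detect polytope containment via support functions. The only cosmetic difference is that you extend the weight inequality from $\bN_\ZZ$ to $\bN_\RR$ by homogeneity and density before invoking the support-function criterion, whereas the paper argues by contrapositive and uses rationality of the polytopes to choose an integral separating functional directly; the two devices are interchangeable.
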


Let us explain why this proposition holds. We observe that for any one-parameter subgroup $\lambda_\ell\subset \bf T$ ($\ell\in \bf N_{\mathbb Z}$):
\begin{eqnarray}
w_{\lambda_\ell}(\tau(w))&=&\inf\{\, (\ell,m)\,|\,m\in A(\tau(w))\,\},\label{eq:sp-2}\\
w_{\lambda_\ell}(\tau(v))&=&\inf\{\, (\ell,m)\,|\,m\in A(\tau(v))\,\}.\label{eq:sp-3}
\end{eqnarray}
If \eqref{eq:sp-1} is false, then we can find $m'\in A(\tau(v))$ which is not contained in $\mathcal{N}(\tau(w))$. This means that there is a linear function $\ell:
\bM_\RR\mapsto \RR$ such that $\ell |_{A(\tau(w))} \ge 0$ and $\ell(m') <0$. Since both $\bV$ and $\bW$ are rational representations, we may take $\ell$ to be integral, i.e., $\ell\in \bN_\ZZ$. Thus,
$$ w_{\lambda_\ell}(\tau(v))\,\le\,(\ell,m')\,<\,0 ~~~{\rm while}~~~ w_{\lambda_\ell}(\tau(w)) \,\ge\,0.$$
This implies that $(v,w)$ is not K-semistable. So the K-semistability implies \eqref{eq:sp-1}. The other direction is clear.

Similarly, we can express the K-stability of $(v,w)$ in terms of polytopes $\mathcal{N}(\tau (v))$, $\mathcal{N}(\tau (w))$ and $\mathcal{N}(\tau)$ corresponding to representations $\bf V$, $\bf W$ and $\bf {gl}|$. Note that For any $\tau\in \bf G$, $\mathcal{N}(\tau)$ is always the standard N-simplex $\mathcal{N}(I)$.
In view of \eqref{eq:sp-2} and \eqref{eq:sp-3}, we see that $(v,w)$ is K-stable if and only if it is K-semistable and
for any $\tau\in \bG$ and $\ell \in \bf N_{\mathbb Z}$,
\begin{eqnarray}\label{eq:sp-4}
&&q\,\inf\{(\ell,m)\,|\,m\in \mathcal{N}(I) \}< \inf\{(\ell,m)\,|\,m\in A(\tau(v))\}.\nonumber\\
&\Rightarrow& \inf\{(\ell,m)\,|\,m\in A(\tau(w))\} < \inf\{(\ell,m)\,|\,m\in A(\tau(v))\}
\end{eqnarray}
Theorem \ref{th:HMP} is equivalent to that for any there is an integer $k$ such that $\tau\in \bG$ and $\ell \in \bN_\ZZ$,
\begin{eqnarray}\label{eq:sp-5}
&&k\,\left(\inf\{(\ell,m)\,|\,m\in A(\tau(v))\}\,-\,\inf\{(\ell,m)\,|\,m\in A(\tau(w))\}\right)\\
&\ge& \inf\{(\ell,m)\,|\,m\in A(\tau(v))\}\,- \,q\,\inf\{(\ell,m)\,|\,m\in \mathcal{N}(I)\}.\nonumber
\end{eqnarray}
By the semi-continuity, there should be only finitely many possibly different $\mathcal{N}(\tau (v))$ and $\mathcal{N}(\tau (w))$,
so we believe that there will be a direct proof of \eqref{eq:sp-5}.

This above approach to proving \eqref{eq:sp-5} may be applied to proving uniform K-stability for a K-stable polarized manifold $(M, L)$ as in Section 5. For each sufficiently large $\ell$, we have a pair $(R_\ell(M),\Delta_\ell(M))$ associated to the embedding given by $H^0(M,L^\ell)$. As we have shown in Section 5,  such a pair $(R_\ell(M),\Delta_\ell(M))$ is uniformly K-stable. Since the ring $R(M,L)$ is finitely generated,
where
$$R(M,L)\,=\,\oplus_{\ell=1}^\infty  H^0(M,L^\ell),$$
the related polytopes $\mathcal{N}(R_\ell(M)) $, $\mathcal{N}(\Delta_\ell(M))$ for all $\ell$ should be determined by finitely many such polytopes, so we may have an uniform estimate for $k$ appeared in \eqref{eq:sp-5} for all $\ell$. This would lead to a proof of the uniform K-stability of $(M,L)$.
However, because the relations among those polytopes $\mathcal{N}(R_\ell(M))$, $\mathcal{N}(\Delta_\ell(M))$ are implicit, the proof may be tricky.

\section{Appendix A: Proof of Lemma \ref{lemm:richardson}}

In this appendix, following \cite{paul13}, we give a proof of Lemma \ref{lemm:richardson} which we restate as follows:

\begin{lemm}\label{lemm:richardson'}
For any $z\in \tilde {\bf T}\backslash \bf T x$, where $x=([v,w],[v,u])$, there is a one-parameter subgroup $\lambda: \mathbb C^* \mapsto \bf T$ such that $\lambda(t) x \rightarrow \tau(z)$ for some $\tau\in \bf T$ as $t \rightarrow 0$.
\end{lemm}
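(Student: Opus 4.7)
The plan is to reduce the statement to the classical Richardson lemma for closures of torus orbits, applied to the image $y := \tilde f(z) \in \bP$ of $z$ under the extension $\tilde f : \tilde Z \to \bP$. Because $z$ lies in $\tilde\bT_x$, the Zariski closure in $\tilde Z$ of the graph $\{(\sigma^{-1}, \sigma(x)) : \sigma \in \bT\}$, its image $y = \tilde f(z)$ necessarily lies in $\overline{\bT \cdot x} \subset \bP$. The asserted convergence ``$\lambda(t) x \to \tau(z)$'' is most naturally interpreted in $\bP$ as $\lim_{t\to 0} \lambda(t) \cdot x = \tau \cdot y$, so it suffices to establish the following classical statement: for every $y \in \overline{\bT \cdot x}$ there exist $\tau \in \bT$ and a one-parameter subgroup $\lambda : \CC^* \to \bT$ with $\lim_{t\to 0} \lambda(t) \cdot x = \tau \cdot y$.

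To prove this, I would first choose a $\bT$-equivariant embedding $\bP \hookrightarrow \mathbb{P}(E)$, where $E$ is a finite-dimensional rational $\bT$-representation. Such an embedding exists because $\bP$ is a smooth projective variety carrying a $\bG$-equivariant ample line bundle obtained from the hyperplane classes on the two factors; a sufficiently high power produces the desired embedding. Pick a lift $\hat x \in E \setminus \{0\}$ of $x$ and decompose $\hat x = \sum_{a \in A(\hat x)} \hat x_a$ into $\bT$-weight components, giving the rational weight polytope $\mathcal{N}(\hat x) \subset \mathbf{M}_\RR$.

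Next I would invoke the standard structure theorem for the closure of a torus orbit in projective space: $\overline{\bT \cdot [\hat x]}$ is a (possibly non-normal) toric variety whose $\bT$-orbits are in bijection with the faces of $\mathcal{N}(\hat x)$, the orbit attached to a face $F$ being that of $[\hat x_F]$, where $\hat x_F := \sum_{a \in F \cap A(\hat x)} \hat x_a$. The given $y$ therefore has the form $y = \tau \cdot [\hat x_F]$ for some face $F$ and some $\tau \in \bT$. Now choose an integral linear functional $\ell \in \bN_\ZZ$ whose minimum on $\mathcal{N}(\hat x)$ is attained exactly on $F$, which is possible because $\mathcal{N}(\hat x)$ is rational. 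Setting $m := \min_{a \in A(\hat x)} (\ell, a)$, a direct weight-by-weight computation gives
\begin{equation*}
\lim_{t \to 0} t^{-m}\, \lambda_\ell(t)\, \hat x \,=\, \hat x_F \quad \text{in } E,
\end{equation*}
hence $\lim_{t\to 0} \lambda_\ell(t) \cdot x = [\hat x_F]$ in $\mathbb{P}(E)$, and applying $\tau$ yields the required limit $\tau \cdot y = \tau(z)$.

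The main obstacle I foresee is matching the blow-up picture, in which $z$ is a priori an exceptional point of $\tilde Z$, with the projective picture of $\overline{\bT \cdot x} \subset \bP$. One must verify that replacing $z$ by $\tilde f(z)$ does not lose any information relevant to the application in Proposition \ref{prop:tian1}, where the lemma is used to detect vanishing of the section $S_{v,w}$ along an irreducible boundary divisor $D \subset \tilde\bT_x \setminus \bT(x)$: for that purpose it is enough that a generic $z \in D$ is reachable by a one-parameter subgroup up to a $\bT$-translation, which is precisely what the argument above delivers once one observes that $\tilde f |_{\tilde\bT_x}$ is generically finite onto $\overline{\bT \cdot x}$. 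A minor secondary point is to ensure that the equivariant ample line bundle on $\bP$ produces a $\bT$-rational representation $E$ whose weights live in the lattice $\mathbf{M}_\ZZ$ dual to $\bN_\ZZ$, which follows immediately from the standing rationality hypothesis on $\bV$ and $\bW$.
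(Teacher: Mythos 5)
Your core argument --- equivariant embedding into $\mathbb{P}(\bf E)$, weight decomposition, the orbit--face correspondence for $\overline{\bT\cdot[\hat x]}$, and a rational $\ell$ in the relative interior of the normal cone of the face $F$ --- is correct and is in substance the same as the paper's. The paper does not cite the orbit--face correspondence but reproves the relevant piece of it by hand: it writes $z=\sum_j z_j\bf e_j$ in an affine chart, uses a sequence $t_\ell\cdot x\to z$ and a product identity to show that $0$ does not lie in the convex hull of the ``vanishing'' weights modulo the span of the ``surviving'' ones, produces $\ell$ by the Hyperplane Separation Theorem, and finally identifies the limit with $\tau(z)$ by a dimension count of orbits, where you get $y=\tau\cdot[\hat x_F]$ directly from the structure theorem. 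Up to this repackaging the two routes are interchangeable.

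The genuine divergence is \emph{where} you run the argument. You push $z$ forward to $y=\tilde f(z)\in\bP$ and prove the limit statement for $y$, whereas the paper assumes from the start that $\tilde{\bf T}$ itself --- the closure inside the blow-up --- is equivariantly embedded in some $\mathbb{P}(\bf E)$ and proves the convergence $\lambda(t)x\to\tau(z)$ there, i.e.\ at the level of $\tilde Z$. This matters for the application: in Proposition \ref{prop:tian1} the divisor $D$ lives in $\tilde{\bf T}_x\setminus\bT(x)\subset\tilde Z$, and if $D$ is contracted by $\tilde f$, then reaching $\tilde f(z)$ in $\bP$ by a one-parameter subgroup does not locate the limit of the lifted path inside $D$, which is what is needed to conclude that $S_{v,w}$ vanishes along $D$. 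Your proposed bridge --- that $\tilde f|_{\tilde{\bf T}_x}$ is generically finite onto $\overline{\bT\cdot x}$ --- does not close this, since generic finiteness says nothing about contracted divisors, and contracted divisors are precisely the situation the blow-up was introduced to resolve. The repair is simple and is exactly what the paper does: since $\tilde Z$ is a projective $\bT$-variety, embed $\tilde{\bf T}_x$ equivariantly into $\mathbb{P}(\bf E)$ for a rational $\bT$-representation $\bf E$ and apply your face/one-parameter-subgroup argument to $z$ itself there; nothing else in your write-up needs to change.
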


\begin{proof} We may assume that $\tilde {\bf T}$ is a smooth subvariety in a projective space ${\mathbb{P}}(\bf E)$ for a $\bf G$-representation $\bf E$. We can have the following decomposition:
$$\bf E\,=\, \sum_{a\in A} \,\bf E_a,~~~{\rm where}~~{\bf E}_{a}\,=\,\{\,u\in \bf E \, |\, t\cdot u\,=\,a(t)\, u \ , \, t \in \bf T\,\}.$$
Accordingly, we can write
$$z\,=\,\sum_{a\in A}\,z_a,~~~z_a\in \bf E_a.$$
Pick up a $z_a\not= 0$, say the first one $z_{a_0}$, then we have $\bf E\,=\,\CC\oplus \bf F$ and can regard $\bF$ as a subspace in ${\mathbb{P}}(\bf E)$, moreover,
$\bf F$ admits $\bf T$-action and a decomposition:
\begin{equation}\label{eq:appen-1}
\bf F\,=\, \sum_{a\in A'} \,\bf F_a,~~~{\rm where}~~{\bf F}_{a}\,=\,\{\,u\in \bF \, |\, t\cdot u\,=\,a(t)\, u \ , \, t \in \bf T\,\},
\end{equation}
where  $ a'\,=\,A\backslash \{a_0\}$. We fix a basis $\{\bf e_i\}_{1\le i\le d'}$ of $\bf F$ such that $ t \cdot \bf e_i = a_i (t) \bf e_i$ for
some $a_i\in A'$.\footnote{For $i\not= j$, we may still have $a_i\,=\,a_j$.}
Since $z\in \bf F\subset {\mathbb{P}}(\bf E)$, $\bf T x\subset \bf F$. Clearly, $Z\,=\,\tilde {\bf T}\cap \bf F \backslash \bf T x$ is $\bf T$-invariant and closed, furthermore,  there are $t_\ell\in \bf T$ such that $t_\ell\rightarrow 0$ and $t_\ell\cdot x$ converge
to $z \in Z$ as $\ell$ goes to $\infty$.

By rearranging the indices, we may write
$$x\,=\,\sum_{i=1}^{d'} x_i \,\bf e_i,~~~z\,=\,\sum_{j=k}^{d'} z_j \,\bf e_j ,$$
where $1\le k \le d'$, $x_i\not=0$ and $z_j\not=0$.
Our assumption implies that $a_i (t_\ell) x_i$ converge to $0$ for $i < k$ and converge to $z_i $ for $i \ge k$.
Since $x_i\not= 0$ and $z_j\not=0$, we get
$$\lim_{\ell\to \infty}  a_i (t_\ell)\,=\,0~~\forall \,i \,<\,k~~{\rm and}~~~\lim_{\ell\to \infty}  a_j(t_\ell)\,=\,\frac{z_j}{x_j}\,=\,1~~\forall\, j\,\ge \,k.$$

Consider the quotient
$$ \pi:  \bM_\RR\,\mapsto\, W\,= \,\bM_\RR / \bM_z,~~~{\rm where}~\bM_z \,=\,\bigoplus_{j=k}^{d'} \,\RR\cdot a_j.$$
Denote by $\Delta$ the convex hull (in $W$) of $a_i$ for $i=1,\cdots, k-1$.

We claim that $0\notin \Delta$. This can be shown as follows: If the claim is false,
then there are real constants $r_1,\cdots, r_{ k-1}\,\ge\,0$ such that
$${\rm some}~r_{i}\,>\, 0,~~~ \sum_{i=1}^{k-1} r_i\,a_i \,=\,0~~{\rm mod}~\bM_z,$$
hence, there are $c_k,\cdots,c_{d'}$ such that
$$
\sum_{i=1}^{k-1} r_i\,a_i\,=\,\sum_{j=k}^{d'} c_j\,a_j .
$$
Hence, for all $t\in \bf T$, we have
\begin{equation}\label{product}
\prod_{i=1}^{k-1} \, |a_i(t)|^{r_i}\,=\,\prod_{j=k}^{d'} |a_j(t)|^{c_j}.
\end{equation}
By plugging in the sequence $\{t_\ell\}$, we get a contradiction since the left side of (\ref{product})
tends to zero while the right side does not. This proves our claim.

Using this claim and the Hyperplane Separation Theorem, one can get a linear functional $f: W \mapsto \RR$ such that
$f(\pi(a_i))\,>\,0$ for $i < k$.
Furthermore, one can choose this to be \emph{rational}.
Next we lift $f$ to $$F\,=\,f\cdot \pi: \bM_\RR \mapsto \RR.$$
Then $F$ is a \emph{rational} linear functional on $\bf M_\RR$. Multiplying it by an integer, we may even assume that $F$ is integer-valued.
Therefore, it induces an one-parameter subgroup $\lambda: \mathbb C^*\mapsto \bf T$ satisfying:
\begin{equation}
\label{eq:lambda}
\lim_{t \to 0}\,\lambda (t) \,x\,=\, \sum_{j=k}^{d'} x_j \bf e_j \,=\,x'.
\end{equation}
Since $a(t_\ell) x_j$ converge to $z_j$ for any $j\ge k$, we have $ \overline{\bf T z} \subset \overline{\bf T x'}$. On the other hand, both
orbits $\bf T z$ and $\bf T x'$ have the same dimension, so $\bf T z\, =\, \bf T x'$. It follows that there is a $\tau\in \bf T$ such that
\begin{equation}\label{eq:lambda-2}
\lim_{t \to 0}\,\lambda (t) \,x\,=\, \tau(z).
\end{equation}
The lemma is proved.

\end{proof}

\vskip5mm

\section {Appendix  B:  A variant proof of Theorem \ref{th:HMP} by Li Yan and Xiaohua Zhu }
In this appendix,   we give an element  proof of Theorem \ref{th:HMP}   by   an approach in Section 6.

\subsection{Stability and polytopes of weights }

Let   $\bf V$ be a  linear space  with a $r$-dimensional   torus $T$-action.  Then we  can decompose it into a  direct sum of $\bf T$-invariant subspaces such that
\begin{eqnarray}
V=\oplus_{\mu\in I_V}m_\mu\check V_\mu,
\end{eqnarray}
where $I_V$ is a finite set of characters of $\bf T$,
$V_\mu$'s are   one-dimensional irreducible representations of $\bf T$ with    multiplicity $m_\mu$. Thus  for any $v\in \bf V$,  there are some $\alpha\in I_V$ and
non zero constants $c_\alpha^v\in\mathbb C$ such that
$$v=\sum_\alpha c_\alpha^ve_\alpha, $$
where  $e_\alpha\in m_\alpha\check V_\alpha$.    It follows that for any one parameter sub-group   $\lambda(t)$ of $\bf T$ with its Lie algebra $\lambda\in Z^r$,
$$\lambda(t)(v)=\sum_\alpha c_\alpha^vt^{\langle\lambda,  \Lambda_\alpha\rangle }e_\alpha,$$
where $  \Lambda_\alpha$ are weights  as   characters  of $\bf T$ associated to $v$.

Set a  convex hull of weights $\{   \Lambda_\alpha\}$  by     $\mathcal N (v)$  and   Let  $v_{\mathcal N(v)}(\cdot)$ be  a support function of $\mathcal N (v)$.
 Then  by (\ref{eq:weight-2}),   it  is easy to see that     the weight  $w_\lambda(v)$ of  $v$ associated to    $\lambda(t)$   is equal to
\begin{eqnarray}\label{weight-support}
w_\lambda(v)=\min_{\alpha}  \langle\lambda, \Lambda_\alpha\rangle= -v_{\mathcal N(v)}(-\lambda).
\end{eqnarray}

Now we  assume that  $V$ is  a  linear  presentation space  of a  reductive Lie group $\bf G$ with a maximal    torus $\bf T$.  Then  for any one parameter subgroup $\lambda(t)'$, there is a $\sigma\in \bf G$ and   one parameter sub-group   $\lambda(t)$ of $\bf T$  such that
$$\lambda(t)' = \sigma\cdot  \lambda(t)  \cdot \sigma^{-1}\subseteq  \bf T'=\sigma\cdot \bf T\cdot \sigma^{-1}.$$
Thus  $\sigma(\check V_\mu)$   are   one-dimensional irreducible representations of torus  $\bf T'$ with property
$$\mathbf {V}=\oplus_{\mu\in I_V} m_\mu \sigma(\check V_\mu).$$
In fact,  for the vector $v'=\sigma^{-1}v$, there are non zero constants $ c_{\alpha'}^{v'}\in\mathbb C$ such that
$$v'=\sum_{\alpha'} c_{\alpha'}^{v'}e_{\alpha'}$$
and
$$\lambda(t)' v'= \sum_{\alpha'} c_{\alpha'}^{v'} t^{\langle\lambda,  \Lambda_{\alpha'} \rangle } e_{\alpha'},$$
where  $e_{\alpha'}\in m_{\alpha'}  \sigma(\check V_{\alpha'})$ and   $  \Lambda_{\alpha'}$ are weights  as   characters  of $\bf T$ associated to $v'$.
Denote by  the convex hull of weights $\{   \Lambda_{\alpha'}\}$  by  $\mathcal N^\sigma (v)$.  It is clear that such  convex hulls $ \mathcal N^\sigma (v)$ are finitely  many  since $\bf V$ is a finitely  dimensional   linear space. Moreover,  the weight  $w_{\lambda'}(v)$ of    $\lambda(t)'$ is given by
\begin{eqnarray}\label{weight-support-2}
w_{\lambda'}(v)=\min_{\alpha'}  \langle\lambda, \Lambda_{\alpha'}\rangle=-v_{\mathcal N^\sigma(v)}(-\lambda).
\end{eqnarray}
Hence, by \eqref{weight-support-2},   we have the following proposition.

\begin{prop}\label{polytope-stability}
\begin{itemize}
\item[(1)]  Pair $(v,w)$ is K-semistable with respect to  $\bf G$  iff
 \begin{eqnarray}\label{semistable-polytope}\mathcal N^\sigma(v)\subseteq\mathcal N^\sigma(w),~\forall ~\sigma\in \bf G.
 \end{eqnarray}
\item[(2)]  Pair $(v,w)$ is K-stable  with respect to  $\bf G$  iff   (\eqref{semistable-polytope}) is satisfied  and
\begin{eqnarray}\label{stable-polytope}\{x|~v_{\mathcal N^\sigma(v)}(x)=v_{\mathcal N^\sigma(w)}(x)\}\subseteq\{x|v_{\mathcal N^\sigma(v)}(x)=v_{\deg(V)\mathcal N^\sigma( \bf I)}(x)\},~\forall ~\sigma\in \bf G.
\end{eqnarray}
\item[(3)] Pair $(v,w)$ is uniform  K-stable   with respect to  $\bf G$  iff  there is an $m\in\mathbb N_+$ such that
\begin{eqnarray}\label{uniformstable-polytope}
\left(1-\frac1m\right)\mathcal N^\sigma(v)+\frac1m\deg(\bf V)\mathcal N^\sigma({\bf I})\subseteq\mathcal N^\sigma(w).
\end{eqnarray}
\end{itemize}
\end{prop}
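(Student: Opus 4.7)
The plan is to translate each of the three stability conditions on $(v,w)$ into a statement about the support functions of the polytopes $\mathcal N^\sigma(v)$, $\mathcal N^\sigma(w)$, $\mathcal N^\sigma(\bf I)$, and then invoke the standard dictionary of convex geometry between support functions and convex polytopes. The key bridge is the already-established identity \eqref{weight-support-2},
$$w_{\lambda'}(v)\,=\,-\,v_{\mathcal N^\sigma(v)}(-\lambda),\qquad \lambda' = \sigma\lambda\sigma^{-1},\ \sigma\in \bf G,\ \lambda\in\bN_\ZZ,$$
together with the fact that every one-parameter subgroup of $\bf G$ is conjugate into the fixed maximal torus $\bf T$. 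Hence a statement ``for every 1-PS of $\bf G$'' is the same as ``for every $\sigma\in\bf G$ and every $\lambda\in\bN_\ZZ$,'' and after extending by positive homogeneity of support functions and density, it is equivalent to ``for every $\sigma\in\bf G$ and every $x$ in the dual space.''

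For (1), the K-semistability condition $w_{\lambda'}(w)\le w_{\lambda'}(v)$ becomes $v_{\mathcal N^\sigma(v)}(x)\le v_{\mathcal N^\sigma(w)}(x)$ for all $x$. Since both polytopes are closed and convex, the standard hyperplane-separation characterization of containment yields $\mathcal N^\sigma(v)\subseteq \mathcal N^\sigma(w)$, which is \eqref{semistable-polytope}. For (2), one first notes that the inequality $qw_{\lambda'}(I)\le w_{\lambda'}(v)$ always holds (the remark after \eqref{eq:deg-of-V}), so K-stability is equivalent, by contrapositive, to: whenever $w_{\lambda'}(v)=w_{\lambda'}(w)$ one has $w_{\lambda'}(v)=qw_{\lambda'}(I)$. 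Translating via \eqref{weight-support-2} and passing to $x$ in the dual space yields exactly \eqref{stable-polytope}. For (3), Theorem \ref{th:HMP} rearranges to $mw_{\lambda'}(w)\le(m-1)w_{\lambda'}(v)+qw_{\lambda'}(I)$; substituting support functions and dividing by $m$ gives
$$v_{\mathcal N^\sigma(w)}(x)\,\ge\,\tfrac{m-1}{m}\,v_{\mathcal N^\sigma(v)}(x)\,+\,\tfrac{1}{m}\,v_{q\mathcal N^\sigma(\bf I)}(x)\quad\text{for all }x,\sigma.$$
By the additivity $v_{P+Q}=v_P+v_Q$ under Minkowski sum and positive homogeneity of support functions, the right-hand side is precisely the support function of $\tfrac{m-1}{m}\mathcal N^\sigma(v)+\tfrac{1}{m}\deg(\bf V)\mathcal N^\sigma(\bf I)$, so the support-function/containment dictionary delivers \eqref{uniformstable-polytope}.

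The argument is essentially a mechanical unpacking of definitions through the dictionary $P\leftrightarrow v_P$, and I do not expect a genuine obstacle. The only points that require minor care are: extending inequalities that a priori hold only for $\lambda$ in the lattice $\bN_\ZZ$ to all $x$ in the ambient real vector space (this is immediate from positive homogeneity together with density), and invoking $v_{P+Q}=v_P+v_Q$ in the proof of (3). Both are textbook facts from the theory of convex bodies, so the whole proposition reduces to bookkeeping around \eqref{weight-support-2}.
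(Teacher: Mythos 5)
Your proposal is correct and follows essentially the same route as the paper, which presents the proposition as a direct consequence of the support-function identity \eqref{weight-support-2} combined with conjugating every one-parameter subgroup into the fixed maximal torus and the standard dictionary $P\subseteq Q \Leftrightarrow v_P\le v_Q$ (with $v_{P+Q}=v_P+v_Q$ for part (3)). The only point worth flagging is that in part (2) the passage from lattice directions $\lambda\in\bN_\ZZ$ to all real $x$ uses not just homogeneity but the rationality of the polytopes, so that rational points are dense in the (rational polyhedral) equality locus $\{x\,|\,v_{\mathcal N^\sigma(v)}(x)=v_{\mathcal N^\sigma(w)}(x)\}$; this is the same implicit step the paper takes.
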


\subsection{$K$-stability  implies $ K$-uniform stability}. As  we discussed  Section 8.1,     the numbers of convex  hulls $\mathcal N^\sigma (v)$,  $ \mathcal N^\sigma (w)$  and    $ \mathcal N^\sigma ({\rm Id})$.  are  all  finite.    Thus by Proposition \ref{polytope-stability} (3),  Theorem   \ref{th:HMP}  is reduced to prove

\begin{theo}\label{unistthm}
Let $\bf V,\bf W$ be two linear space with  a torus $T$-action. Suppose that  pair $(v,w)\in (\bf V\setminus\{0\})\times (\bf W\setminus\{0\})$ is K-stable    with respect to  $\bf T$.  Then   there is an $m\in\mathbb N_+$ such that
\begin{eqnarray}\label{uniformstable-polytope-1}
\left(1-\frac1m\right)\mathcal N(v)+\frac1m\deg(V)\mathcal N({\bf I})\subseteq\mathcal N(w).
\end{eqnarray}

\end{theo}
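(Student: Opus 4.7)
The plan is to translate everything into piecewise linear convex geometry on $\bf M_\RR$ and then reduce the inequality to a finite cone-by-cone analysis. Set $q = \deg(\bf V)$ and define
\begin{equation*}
g(x)\,=\,v_{\mathcal N(w)}(x) - v_{\mathcal N(v)}(x), \qquad h(x)\,=\,q\,v_{\mathcal N({\bf I})}(x) - v_{\mathcal N(v)}(x).
\end{equation*}
Both are continuous, positively $1$-homogeneous, piecewise linear functions on $\bf M_\RR$ with rational break loci. Because the support function of a Minkowski combination is the corresponding combination of support functions, the desired inclusion \eqref{uniformstable-polytope-1} is equivalent to the single pointwise estimate $h(x)\,\le\, m\,g(x)$ on $\bf M_\RR$. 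K-semistability of $(v,w)$ yields $g\ge 0$, while $h\ge 0$ is built into the definition of $q$. Moreover, K-stability of $(v,w)$ with respect to $\bf T$, via (the torus analogue of) Proposition~\ref{polytope-stability}(2), is exactly the set-theoretic inclusion $\{g=0\}\subseteq\{h=0\}$ in $\bf M_\RR$, obtained from the condition on integer one-parameter subgroups by density of rational directions together with continuity.

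Next I would choose a rational polyhedral fan $\Sigma$ in $\bf M_\RR$ refining the normal fans of the three polytopes $\mathcal N(v)$, $\mathcal N(w)$ and $q\,\mathcal N({\bf I})$; on every cone $\sigma\in\Sigma$ all three support functions are linear, so $g|_\sigma$ and $h|_\sigma$ are linear and non-negative. For the primitive generator $u_\rho$ of each extreme ray $\rho$ of $\sigma$, either $g(u_\rho)>0$, or $u_\rho\in\{g=0\}\subseteq\{h=0\}$ and hence $h(u_\rho)=0$. Setting
\begin{equation*}
c_\sigma\,=\,\max\!\left\{\,\frac{h(u_\rho)}{g(u_\rho)} \;:\; \rho \text{ a ray of } \sigma,\ g(u_\rho)>0\,\right\}
\end{equation*}
(with the convention $c_\sigma=0$ if this set is empty), one has $c_\sigma g - h \ge 0$ on every generator of $\sigma$; since this quantity is linear on $\sigma$, it is non-negative on the whole cone. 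Because $\Sigma$ has only finitely many cones, the integer $m = \lceil\max_{\sigma\in\Sigma} c_\sigma\rceil$ is finite and satisfies $h \le m\,g$ globally, which is the required uniform K-stability inequality.

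The main obstacle I anticipate is the very first translation step: confirming that the lattice-theoretic K-stability hypothesis really upgrades to the continuum inclusion $\{g=0\}\subseteq\{h=0\}$ on all of $\bf M_\RR$, rather than only at integer one-parameter subgroups in $\bf N_\ZZ$. Once that is secured (essentially by rational density plus continuity, in the spirit of the $\sigma = \mathrm{Id}$ case of Proposition~\ref{polytope-stability}), what remains is a short, entirely finite argument about piecewise linear functions on a rational fan, with the constant admitting the explicit description $m = \lceil\max_{\sigma\in\Sigma}c_\sigma\rceil$.
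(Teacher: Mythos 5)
Your proof is correct, and it reaches the conclusion by a route that is dual to, and in my view cleaner than, the paper's. The paper works on the polytope side: it splits into the cases $\mathrm{dist}(\partial\mathcal N(v),\partial\mathcal N(w))>0$ and $=0$, locates the common contact facets of $\mathcal N(v)$, $\deg(\bV)\mathcal N({\bf I})$ and $\mathcal N(w)$, compares the associated normal cones via Lemma \ref{support-function-value}, and then verifies the Minkowski inclusion separately near and away from the contact locus using the auxiliary compact sets $\Omega_1\subseteq\Omega_2$ (and the figures). You instead pass entirely to support functions, observe that \eqref{uniformstable-polytope-1} is literally the pointwise inequality $h\le m\,g$ for $g=v_{\mathcal N(w)}-v_{\mathcal N(v)}$ and $h=\deg(\bV)\,v_{\mathcal N({\bf I})}-v_{\mathcal N(v)}$, and check it on a common rational refinement $\Sigma$ of the three normal fans by testing on ray generators. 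Both proofs rest on the same two pillars --- the reformulation of K-stability as $\{g=0\}\subseteq\{h=0\}$ (Proposition \ref{polytope-stability}(2)) and the finiteness of the relevant combinatorial data --- but yours avoids the case analysis and the rather informal local argument of the paper, and it yields an explicit constant $m=\lceil\max_\sigma c_\sigma\rceil$. Three small remarks. First, the obstacle you flag (upgrading $\{g=0\}\subseteq\{h=0\}$ from integral one-parameter subgroups to all of the real vector space) is not actually needed in your argument: you only invoke the implication at the finitely many primitive generators $u_\rho$, which are lattice points, so the lattice form of K-stability suffices verbatim; the upgrade is in any case true because $\{g=0\}$ is a finite union of rational faces of cones of $\Sigma$, in which rational points are dense. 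Second, for the step ``nonnegative on the generators implies nonnegative on the cone'' you should take $\Sigma$ simplicial (or simply fix a finite rational generating set of each cone and test on those), since a non-pointed cone has no extreme rays. Third, the natural domain of $g$ and $h$ is $\bN_\RR$ rather than $\bM_\RR$. None of these affects the validity of the argument.
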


To prove Theorem  \ref{unistthm},  we first state a result for the support function of convex polytope.
Let
\begin{eqnarray}\label{polytopedef}
P=\cap_{A\in\mathcal A}   \{l_A(y)=a_A-u_A(y)\geq0\}
\end{eqnarray}
be a convex polytope and $F^P_A\subseteq \{l_A(y)=0\}$ be its (codim 1) facets. For $\mathcal I\subseteq \mathcal A$,  we denote
$$F^P_\mathcal I=\cap_{i\in\mathcal I}F^P_i.$$

\begin{lemm}\label{support-function-value}
The set
\begin{eqnarray*}
\Omega_\mathcal I^P=\{x|~v_P(x)=\langle x,y\rangle,~\forall y\in F^P_\mathcal I\}=\text{Span}_{\mathbb R_{\geq0}}\{u_i|i\in\mathcal I\}.
\end{eqnarray*}
\end{lemm}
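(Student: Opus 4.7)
The plan is to read the identity as the classical statement in polyhedral convexity that the ``normal cone'' at the face $F^P_{\mathcal{I}}$---i.e., the set of linear functionals whose maximum on $P$ is attained on that face---is exactly the conical hull of the outward normals of the facets meeting at that face. I will interpret $v_P(x) = \max_{y \in P}\langle x, y\rangle$ (consistent with \eqref{weight-support}), so that each $u_A$ is an outward normal to the facet $F^P_A$, and I will read the statement as tacitly assuming that $\mathcal{I}$ is the full set of facet indices containing $F^P_{\mathcal{I}}$, so that at any $y_0 \in \operatorname{relint}(F^P_{\mathcal{I}})$ the active constraints are precisely those in $\mathcal{I}$. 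Then the two inclusions will be established separately.

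The inclusion $\text{Span}_{\mathbb{R}_{\geq 0}}\{u_i : i \in \mathcal{I}\} \subseteq \Omega^P_{\mathcal{I}}$ is a direct computation. For $x = \sum_{i \in \mathcal{I}} c_i u_i$ with $c_i \geq 0$ and any $z \in P$, the defining inequalities $u_i(z) \leq a_i$ give $\langle x, z\rangle \leq \sum_i c_i a_i$, with equality whenever $l_i(z) = 0$ for every $i \in \mathcal{I}$, i.e.\ whenever $z \in F^P_{\mathcal{I}}$. Hence $v_P(x) = \sum_i c_i a_i = \langle x, y\rangle$ for every $y \in F^P_{\mathcal{I}}$, so $x \in \Omega^P_{\mathcal{I}}$.

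The reverse inclusion is where the genuine content sits, and I plan to argue by contradiction using Farkas' lemma. Suppose $x \in \Omega^P_{\mathcal{I}}$ but $x$ is not a nonnegative combination of $\{u_i : i \in \mathcal{I}\}$; by Farkas there is some $y$ with $\langle u_i, y\rangle \leq 0$ for all $i \in \mathcal{I}$ and $\langle x, y\rangle > 0$. I then take $y_0 \in \operatorname{relint}(F^P_{\mathcal{I}})$ and check that $y_0 + \epsilon y \in P$ for all sufficiently small $\epsilon > 0$: the active constraints give $l_i(y_0 + \epsilon y) = -\epsilon\langle u_i, y\rangle \geq 0$, while the inactive constraints $j \notin \mathcal{I}$ are preserved by continuity since $l_j(y_0) > 0$. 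But then $\langle x, y_0 + \epsilon y\rangle > \langle x, y_0\rangle = v_P(x)$, contradicting that $v_P(x)$ is the maximum of $\langle x, \cdot\rangle$ on $P$. The main (and essentially only) obstacle is the clean application of Farkas' lemma combined with the standard bookkeeping of active versus inactive constraints at a relative interior point; once these are in place, the contradiction is immediate.
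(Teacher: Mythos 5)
Your proof is correct, but it takes a different route from the paper's. The paper rewrites $\Omega^P_{\mathcal I}$ as the dual cone $\bigl(\text{Span}_{\mathbb R_+}\{y-y'\,|\,y\in F^P_{\mathcal I},\ y'\in P\}\bigr)^\vee$, observes by convexity that one may restrict $y'$ to $\cup_{i\in\mathcal I}F^P_i$, and then simply asserts the conclusion, leaving the identification of this dual cone with $\text{Span}_{\mathbb R_{\geq 0}}\{u_i\,|\,i\in\mathcal I\}$ to the reader as a standard fact about normal cones. You instead prove the two inclusions directly: the easy one by summing the active inequalities $u_i(z)\le a_i$, and the hard one by Farkas' lemma together with a perturbation $y_0+\epsilon y$ of a relative-interior point of the face. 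Your argument is more self-contained and actually carries out the duality step that the paper elides (the underlying content is the same, since Farkas' lemma is precisely polyhedral cone duality). You are also right to flag explicitly the tacit hypothesis that $\mathcal I$ is the \emph{full} set of facets containing $F^P_{\mathcal I}$; without it the left-hand side depends only on the face $F^P_{\mathcal I}$ while the right-hand side depends on $\mathcal I$, so the identity can fail (e.g.\ at the apex of a square pyramid with $\mathcal I$ a proper subset of the four incident facets), and this assumption is exactly what guarantees that the active constraints at your chosen $y_0\in\operatorname{relint}(F^P_{\mathcal I})$ are precisely those indexed by $\mathcal I$.
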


\begin{proof}
By definition
\begin{eqnarray*}
\begin{aligned}
\Omega_\mathcal I^P&=\{x|~\langle x,y-y'\rangle\geq0,~\forall y\in F^P_\mathcal I\text{ and }y'\in P\}\\
&=\cap_{y\in F^P_\mathcal I,y'\in P}\left(\text{Span}_{\mathbb R_+}\{y-y'\}\right)^\vee.
\end{aligned}
\end{eqnarray*}
By the  convexity of $P$, this is equivalent to
\begin{eqnarray}\label{dual cone}
\begin{aligned}
\Omega_\mathcal I^P=\left(\text{Span}_{\mathbb R_+}\{y-y'|y\in F^P_\mathcal I\text{ and }y'\in \cup_{i\in\mathcal I}F^P_i\}\right)^\vee.
\end{aligned}
\end{eqnarray}
Thus  the lemma is true.
\end{proof}

\begin{proof}[Proof of Theorem \ref{unistthm}]
Since $(v,w)$ is K-stable, by Proposition \ref{polytope-stability} (1), we have \eqref{semistable-polytope}. In particular,  if
$$\text{dist}(\partial\mathcal N(v),\partial\mathcal N(w))\geq\epsilon_0>0,$$
 there is a $\delta_0>0$  such that for any $\delta\in(0,\delta_0)$,
$$\left(1-\delta\right)\mathcal N(v)+\delta \deg(V)\mathcal N({\bf  I}) \subseteq\mathcal N(w),$$
since $\mathcal N(\bf I)$ is compact. The theorem then follows from Proposition \ref{polytope-stability} (3).

In the following,  we  assume that
$$\text{dist}(\partial\mathcal N(v),\partial\mathcal N(w))=0.$$
Then
\begin{eqnarray}\label{case-2}\begin{aligned}&\{x|~v_{\mathcal N(w)}(x)=v_{\mathcal N(w)}(x)\}\\&=\{x|~v_{\mathcal N(w)}(x)=\langle x,y\rangle\text{ and } v_{\mathcal N(w)}(x)=\langle x,y\rangle,~\text{for some } y\in\partial \mathcal N(v)\cap\partial \mathcal N(w)\}.\end{aligned}\end{eqnarray}
By Proposition \ref{polytope-stability} (2), we see that    for any $x$ as above there is  $y\in \partial (\deg(V) \mathcal N({ \bf I})) \cap\mathcal N(v)\cap\partial \mathcal N(w)  $  such that
$$v_{ \mathcal N(v)}(x)=v_{\deg(V) \mathcal N({ \bf I})}(x)=\langle x,y\rangle.$$
In particular, 
\begin{align}\label{three-intersection} \left(\partial \mathcal N(v)\cap\partial\deg(V) \mathcal N({ \bf I})\cap\partial \mathcal N(w)\right)\not=\emptyset.
\end{align}

Recall by the definition of $\deg(V)$ that
$$\mathcal N(v)\subseteq\deg(V)\mathcal N({\bf  I}).$$
Then,  by (\ref{three-intersection}),  there are some facets $F_\mathcal I^{\mathcal N(v) }, F_\mathcal J^{\deg(V)\mathcal N({\rm Id}) }$ and $F_\mathcal K^{\mathcal N(w) }$ of $\mathcal N(v), \deg(V)\mathcal N({\bf  I})$ and ${\mathcal N(w) }$, respectively, such that
$$F_\mathcal I^{\mathcal N(v)}\subseteq\left( F_\mathcal J^{\deg(V)\mathcal N({\rm Id})}\cap F_\mathcal K^{\mathcal N(w)}\right), $$
 where each $F_\spadesuit^\diamondsuit$ above contains a point $y\in\left(\partial \mathcal N(v)\cap\partial\deg(V) \mathcal N({ \bf I})\cap\partial \mathcal N(w)\right)$ in its relative interior.
Thus by  Proposition \ref{polytope-stability} (2)  together with Lemma \ref{support-function-value} and \eqref{case-2},   we get
 (see Figure 1-2 for 4 possible stable cases in dimension $2$, according to different codimensions of $F_\spadesuit^\diamondsuit$),
$$\Omega_\mathcal K^{\mathcal N(w) }\subseteq \Omega_\mathcal J^{\deg(V)\mathcal N({\bf I}) }\subseteq \Omega_\mathcal I^{\mathcal N(v) }.$$
\begin{figure}[h]
\centering
\includegraphics[height=2in]{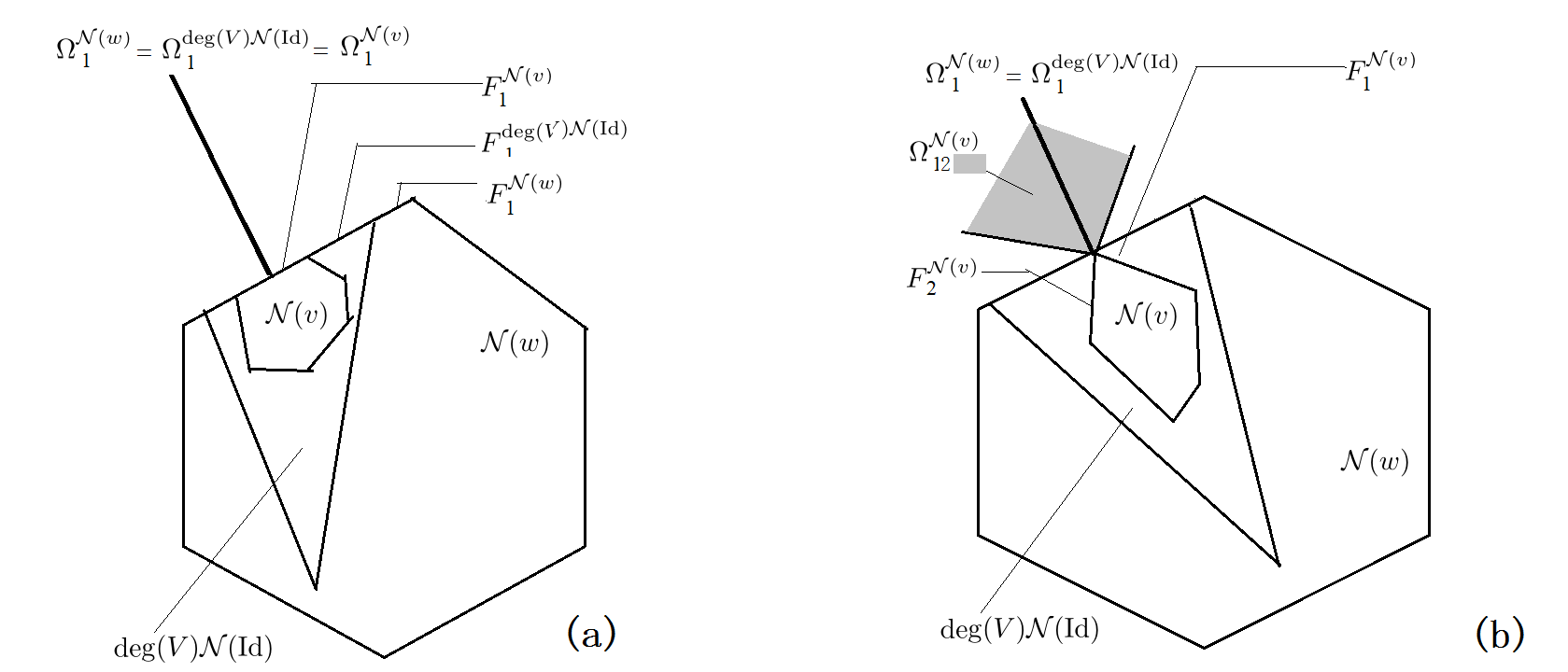}
\caption{Stable examples in $2$-dimension}
\end{figure}
\begin{figure}[h]
\centering
\includegraphics[height=2in]{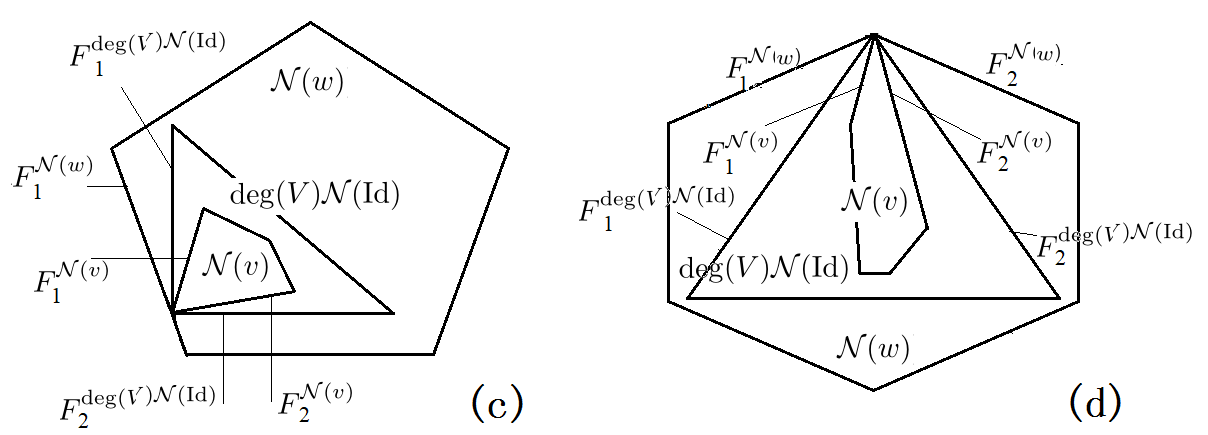}
\caption{Stable examples in $2$-dimension}
\end{figure}
Hence, by the duality property \eqref{dual cone},  there are compact sets $\Omega_1\subseteq\Omega_2$, both contain $F_\mathcal I^{\mathcal N(v) }$ such that (for simplicity, see Figure 3 for the case (d) in Figure 2),
\begin{eqnarray}\label{cones-local}(\Omega_i\cap {\mathcal N(v) })\subseteq(\Omega_i\cap {\deg(V)\mathcal N({\bf  I}) })\subseteq(\Omega_i\cap  {\mathcal N(w) }),~i=1,2.\end{eqnarray}
\begin{figure}[h]
\centering
\includegraphics[height=2in]{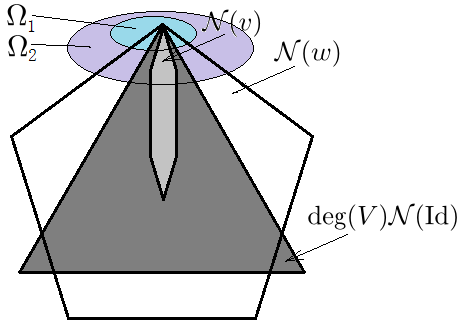}
\caption{}
\end{figure}

Without loss of generality, we may assume there is only one facet $F_\mathcal I^{\mathcal N(v) }$ of $\mathcal N(v)$ such that
\begin{eqnarray}\label{local}
F_\mathcal I^{\mathcal N(v) }\subseteq\left(\partial \mathcal N(v)\cap\partial\deg(V) \mathcal N({\bf  I})\cap\partial \mathcal N(w)\right).
\end{eqnarray}
We consider the following two cases.

\item[(i)] Outside $\Omega_1$. Then we have
$$\text{dist}( \mathcal N(v)\setminus\Omega_1,\partial \mathcal N(w))\geq\epsilon_0>0.$$
Thus  there is a $\delta'_0>0$  such that for any $\delta'\in(0,\delta'_0)$,
$$\left(1-\delta'\right)( \mathcal N(v) \setminus\Omega_1)+\delta' \deg(V)\mathcal N({\bf  I}) \subseteq \mathcal N(w).$$

\item[(ii)] Inside  $\Omega_1$.    Then by \eqref{cones-local}, \eqref{local} and convexity of each $\mathcal N(\cdot)$, there is a $\delta_0>0$  such that for any $\delta\in(0,\delta_0)$,
$$\left(1-\delta\right)(\Omega_1\cap \mathcal N(v) )+\delta \deg(V)\mathcal N( \text{Id}) \subseteq(\Omega_2\cap \mathcal N(w) ).$$
Therefore,  by choosing $$m=\left[\frac1{\min\{\delta_0,\delta'_0\}}\right]+1,$$
we get  (\ref{uniformstable-polytope-1}) immediately.

\end{proof}

\subsection{{\rm CM}-stability} In this subsection, we first give a direct proof of Theorem \ref{th:HMP2}, then we use   Theorem   \ref{th:HMP} to derive Theorem \ref{th:properness}.

Recall the  functional  on $\bf G$ associated to the pair $(v,w)\in \bf V\setminus \{0\} \times \bf W\setminus \{0\}$ in Section 2,
$$p_{v,w}(\sigma)\,=\,\log||\sigma (w)||^2\,-\,\log||\sigma(v)||^2, ~\sigma\in \bf G.$$
 Without loss of generality, we may assume that  $\|\cdot\|$ is an euclidean norm of vector in $\bf V$ or $\bf W$ and  it is invariant under the maximal compact group  $\bf K$ which complexifies $\bf G$.  We note that there is a uniform constant $\delta_0$
 such that for any indices $i,l$ it holds
$$\inf\{ |(k v)_i| ~(k v)_i\neq 0, ~ k\in K\}\ge \delta_0~{\rm and}~\inf\{ |(k w)_l|~(k w)_l\neq 0, k\in K\}\ge \delta_0.$$
Thus  for any one parameter subgroup $\lambda(s)$, and $k,k'.  \in \bf K$, we get
\begin{align}&   p_{v;w}(k'\lambda(s) k) =p_{v;w}( k^{-1}\lambda(s)k) =p_{v;w}(\lambda(s)k)\notag\\
&=-[{\rm max}_{a\in \Lambda(kv)} \langle a, \mathfrak t_\lambda\rangle -{\rm max}_{a'\in \Lambda(kw)}   \langle a', \mathfrak t_\lambda\rangle ]  {\rm log}\frac{1}{s} +\mathcal O(1)\notag\\
&=  (w_{k^{-1}\lambda k}(v)-w_{ k^{-1}\lambda k}(w)){\rm log} \frac{1}{s} +\mathcal O(1) ,~\forall ~|s|\ll1, \notag
\end{align}
where $\mathcal O(1)$ means a uniform bounded constant and  $\mathfrak t_\lambda$ denote the Lie algebra of $\lambda(s)$.

By  the semi-stability of $(v,w)$,     the function
$$f_{kv, kw} ( \mathfrak t)= [ {\rm max}_{a'\in \Lambda(kw)}  \langle a', \mathfrak t\rangle - {\rm max}_{a\in \Lambda(kv)} \langle a, \mathfrak t\rangle]\ge 0$$
for all $  \mathfrak t\in \mathbb Z^n$ and so  for all $ \mathfrak t\in \mathbb Q^n$.  Thus,
$f_{kv, kw} ( \mathfrak t)$  can be extended to a non-negative continuous function in $\mathbb R^n$.

Fix a small $s>0$, for example,  $s=\frac{1}{100}$.  Then,  for  any  $t=(t_1,...,t_n)\in \bf T$, there is $ \mathfrak t=({\mathfrak t}_1,...,{\mathfrak t}_n)\in \mathbb R^n$ such that $$(|t_1|,...,|t_n|) =(s^{{\mathfrak t}_1},..., s^{{\mathfrak t}_n}).$$
It follows that
\begin{align}\label{p-energy} p_{v;w}(k'  t k)=p_{v;w}(t k)&=f_{kv, kw} ( \mathfrak t){\rm log} \frac{1}{s} +\mathcal O(1)\notag\\
&\ge -C.
\end{align}
Hence,  we prove

\begin{theo}
\begin{itemize}
\item[(1)]    Pair $(v,w)$ is  $K$-semistable with  respect to  $G$. Then
\begin{align}\label{lower-bound} p_{v;w}(\sigma)\ge -C, ~\forall ~\sigma\in \bf G. 
\end{align}
\item[(2)]  Pair $(v,w)$ is   $K$-stable   with respect to  $\bf G$. Then    there is an $m\in\mathbb N_+$ such that
\begin{align}\label{properness-p} m p_{v;w}(\sigma )&\ge
 p_{v;{\bf  I}}(\sigma) -C\notag\\
&\ge \deg(V)\log ||\sigma||^2- \log ||\sigma (v)||^2-C,  ~\forall ~\sigma\in \bf G.
\end{align}

\end{itemize}

\end{theo}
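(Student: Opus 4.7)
The plan is to carry out the weight expansion sketched in the paragraph preceding the statement: apply the Cartan ($KAK$) decomposition $\sigma=k'tk$ with $k,k'\in\bf K$ and $t\in\bf T$ to reduce both estimates to uniform bounds on the maximal torus, and then feed in the polytope descriptions of $K$-(semi)stability from Proposition~\ref{polytope-stability} and Theorem~\ref{unistthm}. Since the Hermitian norm on $\bf V$ and $\bf W$ is $\bf K$-invariant, $p_{v,w}(\sigma)=p_{kv,kw}(t)$, so the task reduces to a torus estimate with $k$-dependent data $(kv,kw)$.

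For part (1), decompose $kv=\sum_a c^{kv}_a e_a$ and $kw=\sum_b c^{kw}_b e_b$ in the $\bf T$-weight basis, and parametrise $t$ by $(|t_1|,\dots,|t_n|)=(s^{\mathfrak t_1},\dots,s^{\mathfrak t_n})$ for a fixed $s\in(0,1)$. A direct asymptotic analysis of $\log\|t\cdot kv\|^2=\log\sum_a|c^{kv}_a|^2 s^{2\langle a,\mathfrak t\rangle}$, and similarly for $kw$, gives
\begin{equation*}
p_{v,w}(\sigma)=2\bigl(\min_{a\in\Lambda(kv)}\langle a,\mathfrak t\rangle-\min_{b\in\Lambda(kw)}\langle b,\mathfrak t\rangle\bigr)\log(1/s)+\mathcal O(1).
\end{equation*}
Proposition~\ref{polytope-stability}(1) applied with $\tau=k^{-1}$ converts the $K$-semistability hypothesis into $\mathcal N(kv)\subseteq\mathcal N(kw)$, equivalent to the bracketed expression being $\ge 0$. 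The lower bound $p_{v,w}(\sigma)\ge-C$ then follows, provided the $\mathcal O(1)$ term is uniform in $k$.

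For part (2), Theorem~\ref{unistthm} applied to the torus pair $(kv,kw)$ produces an integer $m>0$, uniform in $k$ by the finiteness of the families $\{\mathcal N^\sigma(v)\}$ and $\{\mathcal N^\sigma(w)\}$ noted in Section~8.2, satisfying
\begin{equation*}
\Bigl(1-\tfrac{1}{m}\Bigr)\mathcal N(kv)+\tfrac{1}{m}\deg(V)\mathcal N({\bf I})\subseteq\mathcal N(kw).
\end{equation*}
Passing to the support function at $-\mathfrak t$ converts this inclusion into the pointwise weight inequality
$m\bigl(\min_{\Lambda(kv)}\langle\cdot,\mathfrak t\rangle-\min_{\Lambda(kw)}\langle\cdot,\mathfrak t\rangle\bigr)\ge\min_{\Lambda(kv)}\langle\cdot,\mathfrak t\rangle-\deg(V)\min_i\mathfrak t_i$. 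Because $\|\sigma\|^2=\sum_i|t_i|^2$ in the Cartan decomposition, one has $\log\|\sigma\|^2=-2\min_i\mathfrak t_i\log(1/s)+\mathcal O(1)$, and inserting the three asymptotic expansions (for $p_{v,w}$, $\log\|\sigma\|^2$, and $\log\|\sigma(v)\|^2$) yields $m\,p_{v,w}(\sigma)\ge p_{v,{\bf I}}(\sigma)-C$. The final inequality in the statement is the identity $p_{v,{\bf I}}(\sigma)=\deg(V)\log\|\sigma\|^2-\log\|\sigma(v)\|^2$ (up to an absorbable constant when $\bf I$ is read as $I^{\otimes\deg(V)}$).

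The main obstacle is to make the $\mathcal O(1)$ remainder genuinely uniform over $k\in\bf K$. Compactness of $\bf K$ supplies the upper bound on $|c^{kv}_a|$ at once, but the uniform positive lower bound $\delta_0$ on those coefficients that are nonzero --- the piece needed to keep the subleading contributions bounded --- is subtler: $k\mapsto c^{kv}_a$ is a matrix coefficient of a finite-dimensional representation, real-analytic on the compact manifold $\bf K$, and may vanish on a proper subvariety, where the naive lower bound fails. A Lojasiewicz-type estimate near the zero locus, combined with the fact that the support pattern $\Lambda(kv)$ takes only finitely many values as $k$ varies, should provide the required uniform control by a finite case split, at the cost of enlarging $C$ (and possibly $m$).
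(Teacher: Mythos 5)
Your proposal follows essentially the same route as the paper's own proof: the Cartan decomposition $\sigma=k'tk$, the $\bf K$-invariance of the norm, the weight/support-function expansion of $p_{v,w}(tk)$ with $|t_i|=s^{\mathfrak t_i}$, and the polytope characterizations of Proposition~\ref{polytope-stability} and Theorem~\ref{unistthm} to force non-negativity of the leading coefficient (the paper phrases this as non-negativity of $f_{kv,kw}$ and $\tilde f_{kv,kw,k{\bf I}}$ via Theorem~\ref{th:HMP}). The only substantive difference is that you flag the uniformity in $k$ of the $\mathcal O(1)$ remainder as a gap requiring a {\L}ojasiewicz-type argument and a finite case split on the support pattern, whereas the paper simply asserts the uniform lower bound $\delta_0$ on the nonzero coefficients of $kv$ and $kw$; you are right to be cautious there, since a real-analytic matrix coefficient on the compact group $\bf K$ can vanish on a proper subvariety, so that assertion as literally stated needs the kind of justification you sketch.
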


\begin{proof} By the $\bf K\times\bf K$ decomposition of $\bf G$, for any $\sigma\in \bf G$, there are $k,k'  \in \bf K$ and $t\in \bf T$  such that
$\sigma=k'  t k$. Thus (\ref{lower-bound}) follows from (\ref{p-energy}). Next,  we can consider the following  energy $\tilde p_{v;w;{\bf  I}}$,
$$\tilde p_{v;w;{\bf  I}}(\sigma)=  m p_{v;w}(\sigma)-   p_{v;{\bf I}}(\sigma),~\forall ~\sigma\in \bf G.$$
Then as in the proof of (\ref{p-energy}), we have
$$\tilde p_{v;w;{\bf  I}}(k'  t k)=\tilde f_{kv, kw, k{\bf  I}} ( \mathfrak t){\rm log} \frac{1}{s} +\mathcal O(1),
$$
where 
\begin{align} &\tilde f_{kv, kw, k{\bf  I}} ( \mathfrak t)\notag\\
&= m[ {\rm max}_{a'\in \Lambda(kw)}  \langle a', \mathfrak t\rangle - {\rm max}_{a\in \Lambda(kv)}  \langle a, \mathfrak t\rangle]-[{\rm max}_{a''\in \Lambda(k{\bf  I})}  \langle a'', \mathfrak t\rangle - {\rm max}_{a\in \Lambda(kv)}  \langle a, \mathfrak t\rangle]
\notag
\end{align}
is a non-negative continuous function in $\mathbb R^n$ by Theorem  \ref{th:HMP}. Thus we get
$$\tilde p_{v;w;{\bf I}}(\sigma)\ge -C,~\forall ~\sigma\in \bf G,$$
which implies (\ref {properness-p}).

\end{proof}

\end{document}